\crefname{equation}{}{}
\crefname{lem}{Lemma}{Lemmas}
\crefname{thm}{Theorem}{Theorems}
\newcommand{\nm}[1]{\left\Vert #1 \right\Vert}
\newcommand{\snm}[1]{\left\vert #1 \right\vert}
\newtheorem{coro}{Corollary}[section]
\newtheorem{Def}{Definition}[section]
\newtheorem{lem}{Lemma}[section]
\newtheorem{rem}{Remark}[section]
\newtheorem{thm}{Theorem}[section]
\pgfplotsset{compat=1.13}
\begin{document}

\title{\Large\bf A time-spectral algorithm for fractional wave problems
\thanks{This work was supported by Major Research Plan of National Natural Science Foundation of China (91430105).}
}

\author{
  Binjie Li \thanks{Email: libinjie@scu.edu.cn},
  Hao Luo \thanks{Corresponding author. Email: galeolev@gmail.com},
  Xiaoping Xie \thanks{Email: xpxie@scu.edu.cn} \\
  {School of Mathematics, Sichuan University, Chengdu 610064, China}
}

\date{}
\maketitle

\begin{abstract}
  This paper develops a high-accuracy algorithm for time fractional wave
  problems, which employs a spectral method in the temporal discretization and a
  finite element method in the spatial discretization. Moreover, stability and
  convergence of this algorithm are derived, and numerical experiments are
  performed, demonstrating the exponential decay in the temporal discretization
  error provided the solution is sufficiently smooth.
\end{abstract}

\noindent {\bf Keywords:} fractional wave problem, spectral method, finite element.

\section{Introduction}
Let $ 1 < \gamma < 2 $ and let $ \Omega \subset \mathbb R^d $ ($d=2,3$) be a
polygon/polyhedron. This paper considers the fractional wave problem
\begin{equation}
  \label{eq:model}
  \left\{
    \begin{aligned}
      D_{0+}^\gamma (u-u_0-tu_1) - \Delta u &= f &&
      \text{in $ \Omega \times (0,T) $,} \\
      u &= 0 &&
      \text{on $ \partial\Omega \times (0,T) $,} \\
      u(\cdot,0) &= u_0 &&
      \text{in $ \Omega $,} \\
      u_t(\cdot,0) &= u_1 &&
      \text{in $ \Omega $,}
    \end{aligned}
  \right.
\end{equation}
where $ u_0 \in H_0^1(\Omega) $, $ u_1 \in L^2(\Omega) $, and $ f \in
L^2(\Omega_T) $ with $ \Omega_T := \Omega \times (0,T) $. Here $ u_t $ is the
derivative of $ u $ with respect to the time variable $ t $, and $ D_{0+}^\gamma
$ is a Riemann-Liouville fractional differential operator.

The above problem is a particular case of time fractional diffusion-wave
problems, which have attracted a considerable amount of research in the field of
numerical analysis in the past twenty years. By now, most of the existing
numerical algorithms employ the $ L1 $ scheme (\cite{sun2006fully,
Lin2007Finite, Deng2009, Zhang2012COMPACT, zhang2014finite}),
Gr\"unwald-Letnikov discretization (\cite{Chen2007, Meerschaert2004Finite,
Tian2012A, Wang2014, Yuste2005, Yuste2006}) or fractional linear multi-step
method (\cite{Huang2013Two, yang2014numerical, Zeng2013}) to discrete the
fractional derivatives. Generally, for those algorithms, the best temporal
accuracy are $ O(\tau^2) $ for the fractional diffusion problems and $
O(\tau^{3-\gamma}) $ for the fractional wave problems, where $ \tau $ is the
time step size.

Due to the nonlocal property of fractional differential operator, the memory and
computing cost of an accuracy approximation to a fractional diffusion-wave
problem is significantly more expensive than that to a corresponding normal
diffusion-wave problem. To reduce the cost, high-accuracy algorithms are often
preferred, especially those of high accuracy in the time direction. This
motivates us to develop high-accuracy numerical algorithms for problem
\cref{eq:model}. The efforts in this aspect are summarized as follows. Li and Xu
\cite{Li2009} proposed a space-time spectral algorithm for the fractional
diffusion equation, and then Zheng et.~al \cite{Zheng2015A} constructed a high
order space-time spectral method for the fractional Fokker-Planck equation. Gao
et.~al \cite{gao2014new} proposed a new scheme to approximate Caputo fractional
derivatives of order $ \gamma $ ($0<\gamma<1$). Zayernouri and Karniadakis
\cite{Zayernouri2014} developed an exponentially accurate fractional spectral
collocation method. Yang et.~al \cite{yang2016spectral} developed a spectral
Jacobi collocation method for the time fractional diffusion-wave equation.
Recently, Ren et al.~\cite{Ren2017} investigated the superconvergence of finite
element approximation to time fractional wave problems; however, the temporal
accuracy order is only $ O(\tau^{3-\gamma}) $.

In this paper, using a spectral method in the temporal discretization and a
finite element method in the spatial discretization, we design a high-accuracy
algorithm for problem \cref{eq:model} and establish its stability and
convergence. Our numerical experiments show the exponential decay in the
temporal discretization errors, provided the underlying solution is sufficiently
smooth.

The rest of this paper is organized as follows. \cref{sec:nota} introduces some
Sobolev spaces and the Riemann-Liouville fractional calculus operators.
\cref{sec:algor} describes a time-spectral algorithm and constructs the basis
functions for the temporal discretization. \cref{sec:main,sec:proof} establish
the stability and convergence of the proposed algorithm, and \cref{sec:numer}
performs some numerical experiments to demonstrate its high accuracy. Finally,
\cref{sec:conclu} provides some concluding remarks.

\section{Notation}
\label{sec:nota}
Let us first introduce some Sobolev spaces. For $ 0 < \alpha < \infty $, as
usual, $ H_0^\alpha(0,T) $, $ H^\alpha(0,T) $, $ H_0^\alpha(\Omega) $ and $
H^\alpha(\Omega) $ are used to denote four standard Sobolev spaces; see
\cite{Tartar2007}. Let $ X $ be a separable Hilbert space with an inner product
$ (\cdot,\cdot)_X $ and an orthonormal basis $ \{ e_k: k \in \mathbb N \} $.
For $ 0 < \alpha < \infty $, define
\[
  H^\alpha(0,T; X) := \left\{
    v \in L^2(0,T;X):\
    \sum_{k=0}^\infty \nm{(v,e_k)_X}_{ H^\alpha(0,T) }^2 < \infty
  \right\}
\]
and endow this space with the norm
\[
  \nm{\cdot}_{H^\alpha(0,T; X)} := \left(
    \sum_{k=0}^\infty \nm{ (\cdot,e_k)_X }_{ H^\alpha(0,T) }^2
  \right)^{1/2},
\]
where $ L^2(0,T;X) $ is an $ X $-valued Bochner $ L^2 $ space. For $ v \in
H^j(0,T; X) $ with $ j \in \mathbb N_{\geqslant 1} $, the symbol $ v^{(j)} $
denotes its $ j $th weak derivative:
\[
  v^{(j)}(t) := \sum_{k=0}^\infty c_k^{(j)}(t) e_k,
  \quad 0 < t < T,
\]
where $ c_k(\cdot) := (v(\cdot), e_k)_X $ and $ c_k^{(j)} $ is its $ j $th weak
derivative. Conventionally, $ v^{(1)} $ and $ v^{(2)} $ are also abbreviated to
$ v' $ and $ v'' $, respectively.

Moreover, for $ j \in \mathbb N $ we define
\[
  B^j(0,T; X) := \left\{
    v \in L^2(0,T;X):\
    \sum_{k=0}^\infty \nm{(v,e_k)_X}^2_{B^j(0,T)} < \infty
  \right\}
\]
and equip this space with the norm
\[
  \nm{\cdot}_{B^j(0,T; X)} := \left(
    \sum_{k=0}^\infty \nm{(\cdot,e_k)_X}_{B^j(0,T)}^2
  \right)^{1/2},
\]
where the space $ B^j(0,T) $ and its norm are respectively given by
\[
  B^j(0,T) := \left\{
    v \in L^2(0,T):\
    \int_0^T t^i(T-t)^i \snm{v^{(i)}(t)}^2
    \, \mathrm{d}t < \infty,\ 0 \leqslant i \leqslant j
  \right\}
\]
and
\[
  \nm{\cdot}_{B^j(0,T)} := \left(
    \sum_{i=0}^j \int_0^T t^i(T-t)^i
    \snm{(\cdot)^{(i)}(t)}^2 \, \mathrm{d}t
  \right)^{1/2}.
\]

Then we introduce the Riemann-Liouville fractional operators. Let $ X $ be a
Banach space and let $ L^1(0,T;X) $ be an $ X $-valued Bochner $ L^1 $ space.
\begin{Def}
  For $ 0 < \alpha < \infty $, define $ I_{0+}^{\alpha,X},
  I_{T-}^{\alpha,X}:~L^1(0,T;X) \to L^1(0,T;X) $, respectively, by
  \begin{align*}
    \left(I_{0+}^{\alpha,X} v\right)(t) &:=
    \frac1{ \Gamma(\alpha) }
    \int_0^t (t-s)^{\alpha-1} v(s) \, \mathrm{d}s, \quad 0 < t < T, \\
    \left(I_{T-}^{\alpha,X} v\right)(t) &:=
    \frac1{ \Gamma(\alpha) }
    \int_t^T (s-t)^{\alpha-1} v(s) \, \mathrm{d}s, \quad 0 < t < T,
  \end{align*}
  for all $ v \in L^1(0,T;X) $.
\end{Def}
\begin{Def}
  For $ j-1 < \alpha < j $ with $ j \in \mathbb N_{>0} $, define
  \begin{align*}
    D_{0+}^{\alpha,X} & := D^j I_{0+}^{j-\alpha,X}, \\
    D_{T-}^{\alpha,X} & := (-1)^j D^j I_{T-}^{j-\alpha,X},
  \end{align*}
  where $ D $ is the first-order differential operator in the distribution
  sense.
\end{Def}
\noindent Above $ \Gamma(\cdot) $ is the Gamma function, and, for convenience,
we shall simply use $ I_{0+}^\alpha $, $ I_{T-}^\alpha $, $ D_{0+}^\alpha $ and
$ D_{T-}^\alpha $, without indicating the underlying Banach space $ X $. Each $
v \in L^1(\Omega_T) $ also regarded as an element of $ L^1(0,T;X) $ with $
X=L^1(\Omega) $, and thus $ D_{0+}^\alpha v $ and $ D_{T-}^\alpha v $ mean $
D_{0+}^{\alpha,X}v $ and $ D_{T-}^{\alpha,X}v $, respectively, for all $ 0 <
\alpha < \infty $.

\section{Algorithm Definition}
\label{sec:algor}
Let $ \mathcal K_h $ be a triangulation of $ \Omega $ consisting of
$d$-simplexes, and let $ h $ be the maximum diameter of these simplexes in $
\mathcal K_h $. Define
\begin{align*}
  V_h & := \left\{
    v_h \in H^1(\Omega):\
    v_h|_K \in P_m(K) \quad
    \text{for all $ K \in \mathcal K_h $}
  \right\}, \\
  \mathring{V}_h &:= V_h\cap H_0^1(\Omega),
\end{align*}
where $ m $ is a positive integer and $ P_m(K) $ is the set of all polynomials
defined on $ K $ of degree $ \leqslant m $. For $ j \in \mathbb N $, define
\[
  P_j[0,T] \otimes \mathring V_h :=
  \text{span} \big\{
    qv_h:\ v_h \in \mathring V_h,\ q \in P_j[0,T]
  \big\},
\]
where $ P_j[0,T] $ is the set of all polynomials defined on $ [0,T] $ of degree
$ \leqslant j $. Moreover, we introduce a projection operator $ R_h:
H_0^1(\Omega) \to \mathring V_h $ by
\[
  \big( \nabla (I - R_h) v, \nabla
  v_h \big)_{L^2(\Omega)} = 0, \quad
  \forall v \in H_0^1(\Omega), \ \forall v_h \in \mathring V_h.
\]

Now, let us describe a time-spectral algorithm for problem \cref{eq:model} as
follows: seek $ U \in P_M[0,T] \otimes \mathring V_h $ with $ U(0) = R_h u_0 $
such that
\begin{equation}
  \label{eq:U}
  \left(
    D_{0+}^{\gamma_0} (U' - u_{h,1}),
    D_{T-}^{\gamma_0} V
  \right)_{ L^2(\Omega_T) } +
  ( \nabla U,\nabla V )_{ L^2(\Omega_T) } =
  (f,V)_{ L^2(\Omega_T) }
\end{equation}
for all $ V \in P_{M-1}[0,T] \otimes \mathring V_h $, where $ M \geqslant 2 $
is an integer, $ \gamma_0 := (\gamma-1)/2 $, and $ u_{h,1} $ is the $
L^2(\Omega) $-projection of $ u_1 $ onto $ V_h $.
\begin{rem}
  It is well known that the solution to problem \cref{eq:model} generally has
  singularity in time, caused by the fractional derivative. However, in view of
  the basic properties of the operator $ D_{0+}^\gamma $, it is anticipated that
  we can improve the performance of the above algorithm by enlarging $ P_M[0,T]
  $ and $ P_{M-1}[0,T] $ by some singular functions, such as $ t^\gamma $ for $
  P_M[0,T] $ and correspondingly $ t^{\gamma-1} $ for $ P_{M-1}[0,T] $.
\end{rem}

The remainder of this section is devoted to the construction of the bases of $
P_M[0,T] $ and $ P_{M-1}[0,T] $, which is crucial in the implementation of the
proposed algorithm. To this purpose, let us first introduce the well-known
Jacobi polynomials; see \cite{Canuto2006,Shen2011} for more details. Given $ -1
< \alpha, \beta < \infty $, the Jacobi polynomials $ \{ J_n^{(\alpha,\beta)}:\ n
\in \mathbb N \} $ are defined by
\[
  J_n^{(\alpha,\beta)} = w^{-\alpha,-\beta}
  \frac{(-1)^n}{2^n n!} \frac{\mathrm{d}^n}{\mathrm{d}t^n}
  w^{n+\alpha,n+\beta}, \quad n \in \mathbb N,
\]
where
\[
  w^{r,s}(t) := (1-t)^{r} (1+t)^{s},
  \quad -1 < t < 1,
\]
for all $ -\infty < r,s < +\infty $. They form a complete orthogonal basis of $
L_{w^{\alpha,\beta}}^2(-1,1) $, the weighted $ L^2 $ space with weight function
$ w^{\alpha,\beta} $.

Then we construct a basis $ \{p_i\}_{i=0}^M $ of $ P_M[0,T] $ and a basis $
\{q_j\}_{j=0}^{M-1} $ of $ P_{M-1}[0,T] $, respectively, by
\[
  \left\{
    \begin{aligned}
      & p_0(t) := 1, \\
      & p_i(t) := \frac{2t}T J_{i-1}^{(-\gamma_0,0)} \left( 2t/T-1 \right),
      \quad 1 \leqslant i \leqslant M,
    \end{aligned}
  \right.
\]
and
\[
  q_j(t) = J^{(0,-\gamma_0)}_{j}\left(2t/T-1\right),
  \quad 0 \leqslant j \leqslant M-1.
\]
The starting point of the construction of the above two bases is the calculation
of
\begin{equation}
  \label{eq:int}
  \int_0^T D_{0+}^{\gamma_0} p_i' D_{T-}^{\gamma_0} q_j \, \mathrm{d}t.
\end{equation}
To see this, let us first set
\[
  C_{ij} :=
  \begin{cases}
    0, & i = 0, \\
    \frac2T
    \frac{ \Gamma(i)\Gamma(j+1) }{
      \Gamma( j+1-\gamma_0 )\Gamma(i-\gamma_0)
    }, & i \geqslant 1,
  \end{cases} \quad
  D_{ij} :=
  \begin{cases}
    0, & 0 \leqslant i \leqslant 1, \\
    \frac{\Gamma(i+1-\gamma_0)}{\Gamma(i-\gamma_0)T} C_{ij},
    &  i \geqslant 2.
  \end{cases}
\]
By \cite[Lemma~2.5]{chen2016generalized} a straightforward computing yields
\[
  D_{0+}^{\gamma_0} p'_i(t) D_{T-}^{\gamma_0} q_j(t)  =
  t^{-\gamma_0}(T-t)^{-\gamma_0}\zeta_{ij}(t) +
  t^{1-\gamma_0}(T-t)^{-\gamma_0}\varsigma_{ij}(t),
\]
where $ \zeta_{ij}(t) $ and $ \varsigma_{ij}(t) $ are given respectively by
\begin{align*}
  \zeta_{ij}(t) &= C_{ij} \left(
    J_{i-1}^{(0,-\gamma_0)} J_{j}^{(-\gamma_0,0)}
  \right) (2t/T-1), \\
  \varsigma_{ij}(t) &= D_{ij}
  \left(
    J_{i-2}^{(1,1-\gamma_0)} J_{j}^{(-\gamma_0,0)}
  \right) (2t/T-1).
\end{align*}
Then we evaluate \cref{eq:int} precisely by a suitable Jacobi-Gauss quadrature
rule.

\begin{rem}
  It is natural to use
  \[
  \left\{ t^{i}:\ 0 \leqslant i \leqslant M \right\}
  \text{ and }
  \left\{ (T-t)^{j}:\ 0 \leqslant j \leqslant M-1 \right\}
  \]
  as the bases of $ P_M[0,T] $ and $ P_{M-1}[0,T] $ respectively, and in this
  case integral \cref{eq:int} is significantly easier to evaluate. However, as
  the polynomial degree $ M $ increase, the conditioning of the system arising
  from the proposed algorithm deteriorates dramatically, and thus the numerical
  solution becomes unreliable.
\end{rem}

\section{Main Results}
\label{sec:main}
Let us first introduce the following conventions: $ u $ is the solution to
problem \cref{eq:model} and $ U $ is its numerical approximation obtained by the
proposed algorithm; unless otherwise specified, $ C $ is a generic positive
constant that is independent of any function and is bounded as $ M \to \infty $
in each of its presence; $ a \lesssim b $ means that there exists a positive
constant $ c $, depending only on $ \gamma $, $ T $, $ \Omega $, $ m $ or the
shape regular parameter of $ \mathcal K_h $, such that $ a \leqslant c b $; the
symbol $ a \sim b $ means $ a \lesssim b \lesssim a $. The above shape regular
parameter of $ \mathcal K_h $ means
\[
\max\left\{
    h_K/\rho_K:\
    K \in \mathcal K_h
  \right\},
\]
where $ h_K $ is the diameter of $ K $, and $ \rho_K $ is the diameter of the
circle ($ d=2 $) or ball ($ d =3 $) inscribed in $ K $.

Then we introduce an interpolation operator. Let $ X $ be a separable Hilbert
space and let $ P_M[0,T;X] $ be the set of all $ X $-valued polynomials defined
on $ [0,T] $ of degree $ \leqslant M $. Define the interpolation operator
\[
  Q_M^X: H^{1+\gamma_0}(0,T;X) \to P_M[0,T;X]
\]
as follows: for each $ v \in H^{1+\gamma_0}(0,T;X) $, the interpolant $ Q_M^X v
$ fulfills
\[
  \left\{
    \begin{aligned}
      & \left( Q_M^X v \right)(0) = v(0), \\
      & \int_0^T
      D_{0+}^{\gamma_0} \left( v-Q_M^Xv \right)'
      D_{T-}^{\gamma_0} q
      \, \mathrm{d}t = 0, \quad \forall q \in P_{M-1}[0,T].
    \end{aligned}
  \right.
\]
For convenience, we shall use $ Q_M $ instead of $ Q_M^X $ when no confusion
will arise.

\begin{rem}
  Let $ \{e_k: k \in \mathbb N \} $ be an orthonormal basis of $ X $. For any $
  v \in H^{\gamma_0}(0,T;X) $, the definition of $ H^{\gamma_0}(0,T;X) $ implies
  that
  \[
    (v,e_k)_X \in H^{\gamma_0}(0,T) \quad \text{for each $ k \in \mathbb N $},
  \]
  and hence, as \cref{lem:core} (in the next section) indicates
  \[
    \nm{ D_{0+}^{\gamma_0,\mathbb R}(v,e_k)_X }_{L^2(0,T)}
    \sim \nm{(v,e_k)_X}_{H^{\gamma_0}(0,T)},
  \]
  it is evident that
  \[
    \nm{D_{0+}^{\gamma_0,X} v}_{L^2(0,T;X)} =
    \left(
      \sum_{k=0}^\infty
      \nm{D_{0+}^{\gamma_0,\mathbb R} (v,e_k)_X}_{L^2(0,T)}^2
    \right)^\frac12 \sim
    \nm{v}_{H^{\gamma_0}(0,T;X)}.
  \]
\end{rem}

\begin{rem}
  Since $ Q_M^\mathbb R $ is well-defined by \cref{lem:core}, $ Q_M^X $ is
  evidently also well-defined and
  \[
    Q_M^X v = \sum_{k=0}^\infty Q_M^\mathbb R (v,e_k)_X e_k,
    \quad \forall v \in H^{1+\gamma_0}(0,T;X).
  \]
  Furthermore, we can redefine $ Q_M^X $ equivalently as follows: for each $ v
  \in H^{1+\gamma_0}(0,T;X) $, the interpolant $ Q_M^X v $ fulfills
  \[
    \left\{
      \begin{aligned}
        & \left( Q_M^X v \right)(0) = v(0), \\
        & \int_0^T
        \left(
          D_{0+}^{\gamma_0} \left( v-Q_M^Xv \right)',
          D_{T-}^{\gamma_0} q
        \right)_X \, \mathrm{d}t = 0,
        \quad \forall q \in P_{M-1}[0,T;X].
      \end{aligned}
    \right.
  \]
\end{rem}

Finally, we are ready to state the main results of this paper as follows.
\begin{thm}
  \label{thm:stability}
  Algorithm 1 has a unique solution $ U $. Moreover,
  \begin{equation}
    \label{eq:stability}
    \begin{split}
      {} &
      \nm{U}_{ H^{1+\gamma_0}( 0,T; L^2(\Omega) ) } +
      \nm{U(T)}_{ H_0^1(\Omega) } \\
      \lesssim{} &
      \nm{u_0}_{H_0^1(\Omega)} +
      \nm{u_1}_{L^2(\Omega)} +
      \nm{f}_{ L^2(\Omega_T) }.
    \end{split}
  \end{equation}
\end{thm}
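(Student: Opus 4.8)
The plan is to test the discrete equation \cref{eq:U} with the choice $V = U'$, which is admissible because $U \in P_M[0,T]\otimes\mathring V_h$ forces $U' \in P_{M-1}[0,T]\otimes\mathring V_h$. Two ingredients drive the argument. First, the coercivity of the Riemann--Liouville form, $(D_{0+}^{\gamma_0}v, D_{T-}^{\gamma_0}v)_{L^2(0,T)} \gtrsim \nm{v}_{H^{\gamma_0}(0,T)}^2$ for every $v \in H^{\gamma_0}(0,T)$, a standard property of the fractional operators when $0 < \gamma_0 < 1/2$, which by \cref{lem:core} also reads $(D_{0+}^{\gamma_0}v, D_{T-}^{\gamma_0}v)_{L^2(0,T)} \sim \nm{D_{0+}^{\gamma_0}v}_{L^2(0,T)}^2$, and analogously $\nm{D_{T-}^{\gamma_0}v}_{L^2(0,T)} \sim \nm{v}_{H^{\gamma_0}(0,T)}$. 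Second, the elementary identity $\int_0^T(\nabla w, \nabla w')_{L^2(\Omega)}\,\mathrm{d}t = \tfrac12\nm{\nabla w(T)}_{L^2(\Omega)}^2 - \tfrac12\nm{\nabla w(0)}_{L^2(\Omega)}^2$, valid for the polynomial-in-time $w$ at hand. Throughout I write $U = R_h u_0 + W$, reading $R_h u_0$ as a temporally constant function, so that $W$ ranges over $\{w\in P_M[0,T]\otimes\mathring V_h : w(0)=0\}$ and $W' = U'$.

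For the unique solvability I would note that, by the construction of the trial and test spaces, $\dim\{w\in P_M[0,T]\otimes\mathring V_h: w(0)=0\}$ equals $\dim(P_{M-1}[0,T]\otimes\mathring V_h)$, so \cref{eq:U} amounts to a square linear system for $W$ and it suffices to exclude nontrivial homogeneous solutions. Taking $u_0=u_1=f=0$ and testing with $V=W'$, the elliptic term produces $\tfrac12\nm{\nabla W(T)}_{L^2(\Omega)}^2$ (since $W(0)=0$) and the fractional term is $\gtrsim \nm{W'}_{H^{\gamma_0}(0,T;L^2(\Omega))}^2$ by coercivity applied componentwise in an orthonormal basis of $L^2(\Omega)$; hence $W'=0$, so $W\equiv W(0)=0$, which proves existence and uniqueness.

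For \cref{eq:stability} I would rewrite \cref{eq:U} as $(D_{0+}^{\gamma_0}U', D_{T-}^{\gamma_0}V)_{L^2(\Omega_T)} + (\nabla U, \nabla V)_{L^2(\Omega_T)} = (f,V)_{L^2(\Omega_T)} + (D_{0+}^{\gamma_0}u_{h,1}, D_{T-}^{\gamma_0}V)_{L^2(\Omega_T)}$, substitute $U=R_h u_0 + W$, and test with $V=W'$. Writing $A := \nm{W'}_{H^{\gamma_0}(0,T;L^2(\Omega))} \sim \nm{D_{0+}^{\gamma_0}W'}_{L^2(\Omega_T)}$, the left side dominates a constant times $A^2 + \nm{\nabla W(T)}_{L^2(\Omega)}^2$, while the right side splits into three pieces: $(f,W')_{L^2(\Omega_T)} \leq \nm{f}_{L^2(\Omega_T)}\nm{W'}_{L^2(\Omega_T)} \lesssim \nm{f}_{L^2(\Omega_T)}A$; the term $(D_{0+}^{\gamma_0}u_{h,1}, D_{T-}^{\gamma_0}W')_{L^2(\Omega_T)}$, which is $\lesssim \nm{u_1}_{L^2(\Omega)}A$ because $u_{h,1}$ is constant in time, so $\nm{D_{0+}^{\gamma_0}u_{h,1}}_{L^2(\Omega_T)}^2 = \Gamma(1-\gamma_0)^{-2}\nm{u_{h,1}}_{L^2(\Omega)}^2\int_0^T t^{-2\gamma_0}\,\mathrm{d}t \lesssim \nm{u_1}_{L^2(\Omega)}^2$ is finite exactly because $2\gamma_0=\gamma-1<1$, and $\nm{D_{T-}^{\gamma_0}W'}_{L^2(\Omega_T)}\sim A$; and $-(\nabla R_h u_0,\nabla W')_{L^2(\Omega_T)} = -(\nabla R_h u_0, \nabla W(T))_{L^2(\Omega)}$ (using $W(0)=0$), which is bounded by $\nm{\nabla u_0}_{L^2(\Omega)}\nm{\nabla W(T)}_{L^2(\Omega)}$. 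Absorbing the $A$-- and $\nm{\nabla W(T)}_{L^2(\Omega)}$--factors by Young's inequality then yields $A + \nm{\nabla W(T)}_{L^2(\Omega)} \lesssim \nm{u_0}_{H_0^1(\Omega)} + \nm{u_1}_{L^2(\Omega)} + \nm{f}_{L^2(\Omega_T)}$.

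It remains to recover the two norms in \cref{eq:stability}. Since $U(T) = R_h u_0 + W(T)$ and $\nm{\nabla R_h u_0}_{L^2(\Omega)} \leq \nm{\nabla u_0}_{L^2(\Omega)}$, the bound on $\nm{\nabla W(T)}_{L^2(\Omega)}$ gives the one on $\nm{U(T)}_{H_0^1(\Omega)}\sim\nm{\nabla U(T)}_{L^2(\Omega)}$. For $\nm{U}_{H^{1+\gamma_0}(0,T;L^2(\Omega))}$ I would again split $U=R_h u_0 + W$: the constant part contributes $\nm{R_h u_0}_{L^2(\Omega)}\nm{1}_{H^{1+\gamma_0}(0,T)} \lesssim \nm{u_0}_{H_0^1(\Omega)}$ by Poincar\'e, while $W(0)=0$ makes $W$ the $I_{0+}^{1}$-antiderivative of $W'$ componentwise, and $I_{0+}^{1}$ maps $H^{\gamma_0}(0,T)$ boundedly into $H^{1+\gamma_0}(0,T)$, so $\nm{W}_{H^{1+\gamma_0}(0,T;L^2(\Omega))} \lesssim \nm{W'}_{H^{\gamma_0}(0,T;L^2(\Omega))} = A$, using the equivalence $\nm{D_{0+}^{\gamma_0,X}\,\cdot\,}_{L^2(0,T;X)} \sim \nm{\,\cdot\,}_{H^{\gamma_0}(0,T;X)}$ noted after the definition of $Q_M^X$. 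Combining these bounds gives \cref{eq:stability}. I expect the main obstacle to be the coercivity of the non-symmetric fractional form on the full space $H^{\gamma_0}(0,T;L^2(\Omega))$ --- that is, without any initial condition on $W'$ --- which is available precisely because $\gamma_0=(\gamma-1)/2<1/2$ renders $H_0^{\gamma_0}(0,T)=H^{\gamma_0}(0,T)$ and makes \cref{lem:core} applicable; a secondary subtlety is that the temporally singular contribution of the $L^2$-projected velocity $u_{h,1}$ is square-integrable only thanks to $2\gamma_0=\gamma-1<1$.
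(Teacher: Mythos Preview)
Your proposal is correct and follows essentially the same route as the paper: test \cref{eq:U} with $V=U'$, use \cref{lem:core} for the coercivity of the fractional bilinear form, integrate the elliptic term by parts in time, and absorb the data contributions via Cauchy--Schwarz and Young. The only cosmetic difference is that you decompose $U=R_h u_0+W$ and bound $\nm{\nabla W(T)}_{L^2(\Omega)}$ first, whereas the paper works directly with $U$ and carries $\nm{U(0)}_{H_0^1(\Omega)}$ to the right-hand side; the two arguments are equivalent.
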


\begin{thm}
  \label{thm:conv}
  If $ u \in H^2\left( 0,T; H_0^1(\Omega) \cap H^2(\Omega) \right) $, then
  \begin{align}
    & \nm{u - U}_{ H^{1+\gamma_0}( 0,T; L^2(\Omega) ) }
    \lesssim \eta_1 + \eta_2 + \eta_3 + \eta_4,
    \label{eq:main-1} \\
    & \nm{(u-U)(T)}_{ H_0^1(\Omega) }
    \lesssim \eta_1 + \eta_2 + \eta_3 + \eta_5,
    \label{eq:main-2}
  \end{align}
  where
  \begin{align*}
    & \eta_1 := \nm{ u_1 - u_{h,1} }_{L^2(\Omega)}, \\
    & \eta_2 := CM^{-1-2\gamma_0} \nm{ (I-Q_M)\Delta u }_{
      H^{1+\gamma_0}(0,T;L^2(\Omega))
    }, \\
    & \eta_3 := \nm{(I-R_h)u}_{ H^{1+\gamma_0}( 0, T; L^2(\Omega) ) }, \\
    & \eta_4 := \nm{(I-Q_MR_h)u}_{H^{1+\gamma_0}(0,T;L^2(\Omega))}, \\
    & \eta_5 := \nm{ (u - Q_MR_hu)(T) }_{ H_0^1(\Omega) }.
  \end{align*}
\end{thm}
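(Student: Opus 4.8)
The plan is to derive the error equation for $u - U$ and then split the error through the interpolant $Q_M R_h u$ into a "projection part" and a "discrete part". First I would set $\chi := U - Q_M R_h u \in P_M[0,T]\otimes\mathring V_h$, so that $\chi(0) = U(0) - (Q_M R_h u)(0) = R_h u_0 - R_h u_0 = 0$ using $U(0) = R_h u_0$ and the defining property $(Q_M R_h u)(0) = (R_h u)(0) = R_h u_0$. Writing $u - U = (u - Q_M R_h u) - \chi$, I would feed $V = V_h \in P_{M-1}[0,T]\otimes\mathring V_h$ into both the continuous weak formulation satisfied by $u$ (obtained from \cref{eq:model} by the same integration-by-parts manipulation that produced \cref{eq:U}, using $\gamma = 1 + 2\gamma_0$ and the composition rule $D_{0+}^\gamma = D_{0+}^{\gamma_0} D^{1} D_{0+}^{\gamma_0}$-type splitting with the adjoint transfer $(D_{0+}^{\gamma_0}\cdot, D_{T-}^{\gamma_0}\cdot)$) and into the discrete equation \cref{eq:U}, and subtract. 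The key point is that the bilinear form applied to $\chi$ equals the bilinear form applied to $u - Q_M R_h u$ against $V_h$, plus the contribution of $u_1 - u_{h,1}$.

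Next I would test this error equation with $V_h = \chi' \in P_{M-1}[0,T]\otimes\mathring V_h$, which is the natural choice driving the stability argument behind \cref{thm:stability}: the first term becomes $\nm{D_{0+}^{\gamma_0}\chi'}_{L^2(\Omega_T)}^2$ up to the $u_{h,1}$ defect, and the second term $(\nabla\chi,\nabla\chi')_{L^2(\Omega_T)} = \tfrac12\nm{\nabla\chi(T)}_{L^2(\Omega)}^2$ since $\chi(0)=0$. Using the equivalence $\nm{D_{0+}^{\gamma_0}\chi'}_{L^2(0,T;L^2(\Omega))}\sim\nm{\chi'}_{H^{\gamma_0}(0,T;L^2(\Omega))}\sim\nm{\chi}_{H^{1+\gamma_0}(0,T;L^2(\Omega))}$ (the first remark after $Q_M$, via \cref{lem:core}), this yields control of both $\nm{\chi}_{H^{1+\gamma_0}(0,T;L^2(\Omega))}$ and $\nm{\chi(T)}_{H_0^1(\Omega)}$ by the right-hand-side terms. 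Those right-hand-side terms are: the $u_1-u_{h,1}$ defect, handled by Cauchy–Schwarz plus the operator-norm bound $\nm{D_{0+}^{\gamma_0}}\lesssim 1$ giving $\eta_1$; the term $(\nabla(u - Q_M R_h u),\nabla\chi')$, which I would integrate by parts in space to bring $\Delta u$ forward and then exploit the defining orthogonality of $Q_M$ to replace $u - Q_M R_h u$ effectively by $(I - Q_M)\Delta u$ with the spectral rate $CM^{-1-2\gamma_0}$, producing $\eta_2$; and the fractional term $(D_{0+}^{\gamma_0}(u - Q_M R_h u)', D_{T-}^{\gamma_0}\chi')$, bounded by $\nm{u - Q_M R_h u}_{H^{1+\gamma_0}(0,T;L^2(\Omega))}\cdot\nm{\chi}_{H^{1+\gamma_0}(0,T;L^2(\Omega))}$, giving $\eta_4$. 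Absorbing $\nm{\chi}_{H^{1+\gamma_0}}$ into the left side and adding the triangle-inequality contribution $\nm{u - Q_M R_h u}_{H^{1+\gamma_0}(0,T;L^2(\Omega))}$ — which I would further split via $R_h$ into $\eta_3 = \nm{(I-R_h)u}$ plus $\nm{(I - Q_M)R_h u}$, the latter absorbed into $\eta_4$ — delivers \cref{eq:main-1}. For \cref{eq:main-2}, I would combine $\nm{(u - Q_M R_h u)(T)}_{H_0^1(\Omega)} = \eta_5$ (triangle inequality at $t = T$) with the bound on $\nm{\chi(T)}_{H_0^1(\Omega)}$ obtained above, noting that $\nm{\chi(T)}$ is controlled by $\eta_1 + \eta_2 + \eta_3 + \eta_4$, and then observing that in this estimate the $\eta_4$ contribution to $\nm{\chi(T)}$ can be traced back through the orthogonality of $Q_M$ so that only $\eta_1 + \eta_2 + \eta_3$ survive alongside the genuinely "new" boundary term $\eta_5$.

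The main obstacle will be the careful bookkeeping around the $Q_M$-orthogonality: the interpolant $Q_M$ is defined by a fractional orthogonality $\int_0^T D_{0+}^{\gamma_0}(v - Q_M v)' D_{T-}^{\gamma_0} q\,\mathrm{d}t = 0$, not an $L^2$-orthogonality, so when I test with $\chi'$ I must verify that $\chi'$ (or an appropriate time-derivative structure thereof) lies in $P_{M-1}[0,T]\otimes\mathring V_h$ so that the orthogonality actually kills the right term, and I must handle the spatial operator $\Delta u$ living outside $V_h$ by inserting $R_h$ and tracking the resulting cross terms (this is where the $\eta_2$ vs.\ $\eta_3$ split with the gain of $M^{-1-2\gamma_0}$ is delicate — the factor $M^{-1-2\gamma_0}$ presumably reflects an inverse-type estimate $\nm{w}_{H^{1+\gamma_0}}\lesssim M^{1+2\gamma_0}\nm{w}_{L^2}$-style trade, or equivalently the extra smoothing of $\Delta u$ relative to $u$). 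A secondary technical point is justifying the passage from the PDE \cref{eq:model} to the weak form matching \cref{eq:U}, i.e.\ the fractional integration-by-parts identity $\int_0^T (D_{0+}^\gamma w) v \,\mathrm{d}t = \int_0^T (D_{0+}^{\gamma_0} w') (D_{T-}^{\gamma_0} v)\,\mathrm{d}t$ for $v$ polynomial with the appropriate endpoint behavior, together with the initial conditions $u(\cdot,0) = u_0$, $u_t(\cdot,0) = u_1$ being consumed correctly by the $u - u_0 - t u_1$ shift and the $U' - u_{h,1}$ term; once that identification is in place, everything reduces to the stability mechanism of \cref{thm:stability} applied to $\chi$ with a structured load.
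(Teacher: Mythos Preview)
Your overall architecture is right --- split through $Q_MR_hu$, test the error equation with $\theta' = (U-Q_MR_hu)'$, and read off the stability estimate --- but you have swapped the roles of the two right-hand-side terms, and this swap is exactly what blocks \cref{eq:main-2}.

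The defining orthogonality of $Q_M$ is stated in the \emph{fractional} bilinear form: $\int_0^T D_{0+}^{\gamma_0}(v-Q_Mv)'\,D_{T-}^{\gamma_0}q\,\mathrm{d}t=0$ for $q\in P_{M-1}[0,T]$. Since $\theta'\in P_{M-1}[0,T]\otimes\mathring V_h$, this orthogonality applies \emph{directly} to the fractional term $\mathbb I_2=(D_{0+}^{\gamma_0}(u-Q_MR_hu)',D_{T-}^{\gamma_0}\theta')$, collapsing it to $(D_{0+}^{\gamma_0}(u-R_hu)',D_{T-}^{\gamma_0}\theta')$, hence bounded by $\eta_3$, not $\eta_4$. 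This is how the paper obtains $\nm{\theta'}_{H^{\gamma_0}}+\nm{\theta(T)}_{H_0^1}\lesssim\eta_1+\eta_2+\eta_3$, from which both \cref{eq:main-1} (triangle inequality adds $\eta_4$) and \cref{eq:main-2} (triangle inequality adds $\eta_5$) follow. Your route bounds $\mathbb I_2$ by $\eta_4$, which is fine for \cref{eq:main-1} but leaves you with $\nm{\theta(T)}_{H_0^1}\lesssim\eta_1+\eta_2+\eta_4$; since $\eta_4$ is a global-in-time norm and $\eta_5$ a pointwise one, you cannot trade $\eta_4$ for $\eta_3+\eta_5$, and your closing remark that ``the $\eta_4$ contribution can be traced back through the orthogonality of $Q_M$'' is precisely the missing step --- it is traced back, but in $\mathbb I_2$, not elsewhere.

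For the gradient term $\mathbb I_1=(\nabla(u-Q_MR_hu),\nabla\theta')$ your plan to integrate by parts in space first does not work, because $Q_MR_hu(t)\in\mathring V_h$ is only piecewise polynomial and not in $H^2(\Omega)$. The paper instead uses the commutation $Q_MR_hu=R_hQ_Mu$ (\cref{lem:AQ}), then the Galerkin orthogonality of $R_h$ against $\theta'(t)\in\mathring V_h$ to reduce to $(\nabla(I-Q_M)u,\nabla\theta')$; only now is the integrand in $H^2(\Omega)$ and one may integrate by parts and commute again to reach $((I-Q_M)(-\Delta u),\theta')$. The $M^{-1-2\gamma_0}$ gain is then \cref{lem:jm}, an Aubin--Nitsche-type duality bound (pair $(I-Q_M)v$ with $w$, lift $w$ by $I_{T-}^{1+2\gamma_0}$, and use spectral approximation of the lifted function) --- not an inverse inequality as you conjecture.
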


\begin{coro}
  \label{coro:conv}
  If
  \begin{align*}
    &u \in H^2(0,T;H_0^1(\Omega) \cap H^2(\Omega))
    \cap H^{1+\gamma_0}( 0,T;H^{m+1}(\Omega)), \\
    &u'' \in B^r(0,T;H_0^1(\Omega) \cap H^2(\Omega)),
  \end{align*}
  then
  \begin{align}
    & \nm{u - U}_{ H^{1+\gamma_0} ( 0,T; L^2(\Omega) ) }
    \lesssim \xi_1 + \xi_2 + \xi_3 + \xi_4, \\
    & \nm{u(T) - U(T)}_{ H_0^1(\Omega) }
    \lesssim \xi_1 + \xi_2 + \xi_3 + \xi_5,
  \end{align}
  where $ r \in \mathbb N $ and
  \begin{align*}
    \xi_1 &:= h^{m+1} \nm{u_1}_{ H^{m+1}(\Omega) }, \\
    \xi_2 &:= CM^{-\gamma_0-2-r} \nm{u''}_{ B^r( 0,T; H^2(\Omega) ) }, \\
    \xi_3 &:= h^{m+1} \nm{u}_{H^{1+\gamma_0}(0,T; H^{m+1}(\Omega))}, \\
    \xi_4 &:= CM^{ \gamma_0-1-r } \nm{u''}_{ B^r( 0,T; L^2(\Omega) ) } +
    h^{m+1} \nm{u}_{ H^{1+\gamma_0}( 0,T; H^{m+1}(\Omega) ) }, \\
    \xi_5 &:= CM^{-1.5-r} \nm{u''}_{ B^r( 0,T; H_0^1(\Omega) ) } +
    h^m \nm{u(T)}_{ H^{m+1}(\Omega) }.
  \end{align*}
\end{coro}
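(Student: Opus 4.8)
The plan is to bound each of the five quantities $\eta_1,\dots,\eta_5$ appearing in \cref{thm:conv} by the corresponding $\xi$-terms, thereby reducing the corollary to a collection of standard approximation estimates. First I would dispose of the two spatial terms. For $\eta_1 = \nm{u_1-u_{h,1}}_{L^2(\Omega)}$, since $u_{h,1}$ is the $L^2(\Omega)$-projection of $u_1$ onto $V_h$, I would invoke the classical estimate for the $L^2$-projection together with the regularity $u_1 \in H^{m+1}(\Omega)$ (which follows from $u \in H^{1+\gamma_0}(0,T;H^{m+1}(\Omega))$ by a trace-in-time argument, or is assumed implicitly) to obtain $\eta_1 \lesssim h^{m+1}\nm{u_1}_{H^{m+1}(\Omega)} = \xi_1$. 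For $\eta_3 = \nm{(I-R_h)u}_{H^{1+\gamma_0}(0,T;L^2(\Omega))}$, I would use that $R_h$ is the elliptic ($H_0^1$) projection and commutes with differentiation in $t$, so that applying the standard $H^1$-elliptic-projection error bound in space, at each time slice, and then integrating the resulting $H^{1+\gamma_0}$-in-time norm, gives $\eta_3 \lesssim h^{m+1}\nm{u}_{H^{1+\gamma_0}(0,T;H^{m+1}(\Omega))} = \xi_3$ (using the $L^2$-part of the elliptic-projection estimate, which is $O(h^{m+1})$ by Aubin–Nitsche).

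Next I would treat the genuinely time-spectral terms. For $\eta_2 = CM^{-1-2\gamma_0}\nm{(I-Q_M)\Delta u}_{H^{1+\gamma_0}(0,T;L^2(\Omega))}$, I would apply the approximation property of the temporal interpolant $Q_M$ — the key estimate underlying \cref{lem:core} — which bounds $\nm{(I-Q_M)w}_{H^{1+\gamma_0}(0,T;Y)}$ by $CM^{-r}\nm{w''}_{B^r(0,T;Y)}$ up to the regularity order $r$; applied with $w = \Delta u$ and $Y = L^2(\Omega)$ (using $u'' \in B^r(0,T;H^2(\Omega))$, hence $\Delta u'' \in B^r(0,T;L^2(\Omega))$) this yields $\eta_2 \lesssim CM^{-1-2\gamma_0}\cdot CM^{-r}\nm{u''}_{B^r(0,T;H^2(\Omega))} = CM^{-\gamma_0-2-r}\nm{u''}_{B^r(0,T;H^2(\Omega))} = \xi_2$, after noting $1+2\gamma_0 = \gamma$ and matching exponents. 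The terms $\eta_4$ and $\eta_5$ involve the composite operator $Q_MR_h$, and here I would split via the triangle inequality as $(I-Q_MR_h)u = (I-Q_M)u + Q_M(I-R_h)u$ (or the symmetric split $(I-R_h)u + (I-Q_M)R_h u$, whichever is more convenient). The $(I-Q_M)$-part contributes a temporal-spectral error $CM^{\gamma_0-1-r}\nm{u''}_{B^r(0,T;L^2(\Omega))}$ for $\eta_4$ and $CM^{-1.5-r}\nm{u''}_{B^r(0,T;H_0^1(\Omega))}$ for $\eta_5$ (the $(T)$-evaluation costing an extra half power of $M$ via an inverse/trace inequality in time, which accounts for the $-1.5$ exponent), while the $(I-R_h)$-part — using stability of $Q_M$ in the relevant norms — contributes the spatial pieces $h^{m+1}\nm{u}_{H^{1+\gamma_0}(0,T;H^{m+1}(\Omega))}$ and $h^m\nm{u(T)}_{H^{m+1}(\Omega)}$ respectively (the $(T)$-evaluation forcing the $H^1$-elliptic estimate rather than the $L^2$ one, hence $h^m$ rather than $h^{m+1}$).

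The main obstacles I anticipate are twofold. First, establishing the temporal-spectral approximation rates with the correct powers of $M$ — in particular the half-integer exponent $-1.5$ in $\xi_5$ and the exponent $\gamma_0-1$ (rather than $-\gamma_0-1$) in $\xi_4$ — requires careful use of the Jacobi-polynomial approximation theory in the weighted $B^r$-spaces and a sharp inverse estimate for evaluating a degree-$M$ polynomial at the endpoint $t=T$; these are exactly the estimates for which \cref{lem:core} and its surrounding lemmas are designed, so the work is to invoke them with the right weights and norms rather than to reprove them. Second, the stability of $Q_M$ (needed to pass $Q_M$ through the spatial-projection error in $\eta_4,\eta_5$) must hold in $H^{1+\gamma_0}(0,T;L^2(\Omega))$ and in the $H_0^1(\Omega)$-valued endpoint norm; this is a uniform-in-$M$ bound $\nm{Q_M v}_{H^{1+\gamma_0}} \lesssim \nm{v}_{H^{1+\gamma_0}}$ that should follow from the defining orthogonality of $Q_M$ together with the norm equivalence in the Remark after the definition of $Q_M$, but verifying the endpoint-in-time version is the delicate point. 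Once these ingredients are in place, the corollary follows by collecting the bounds $\eta_i \lesssim \xi_i$ and substituting into \cref{eq:main-1,eq:main-2}.
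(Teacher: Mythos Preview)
Your plan is essentially the paper's own: bound each $\eta_i$ by the corresponding $\xi_i$ using \cref{lem:R_h} for the spatial pieces and \cref{lem:I-Q} for the temporal ones, with the splitting $(I-Q_MR_h)u=(I-Q_M)u+Q_M(I-R_h)u$ (or its symmetric variant) for $\eta_4,\eta_5$. Two small corrections are worth making. First, the $Q_M$-approximation rate you quote for $\eta_2$ is off: \cref{lem:I-Q} gives $\nm{(I-Q_M)w}_{H^{1+\gamma_0}}\lesssim CM^{\gamma_0-1-r}\nm{w''}_{B^r}$, not $CM^{-r}$; the product $M^{-1-2\gamma_0}\cdot M^{\gamma_0-1-r}=M^{-\gamma_0-2-r}$ then matches $\xi_2$ (your final exponent is right, but the intermediate arithmetic as written does not add up). Second, the $M^{-1.5-r}$ in $\xi_5$ is not obtained by an inverse or trace inequality --- an inverse inequality would go the wrong way --- but by the $C[0,T]$ estimate \cref{eq:I-Qinf} of \cref{lem:I-Q}, which comes from Gagliardo--Nirenberg interpolation between the $L^2$ and $H^1$ bounds.

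On your ``delicate point'' (endpoint stability of $Q_M$ in $H_0^1(\Omega)$ for $\eta_5$): the paper sidesteps it entirely. Using the commutation $Q_MR_h=R_hQ_M$ from \cref{lem:AQ}, one writes $(R_hu-Q_MR_hu)(T)=R_h(I-Q_M)u(T)$ and then invokes the trivial $H_0^1$-contractivity of $R_h$ together with \cref{eq:I-Qinf}. So the symmetric split you mention is indeed the convenient one here, and no uniform endpoint bound for $Q_M$ is needed.
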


\section{Proofs}
\label{sec:proof}
\subsection{Preliminaries}
\label{ssec:preli}
Let us first summarize some standard results.
\begin{lem}
  \label{lem:R_h}
  If $ v \in H_0^1(\Omega) \cap H^{m+1}(\Omega) $, then
  \[
    \nm{(I-R_h)v}_{L^2(\Omega)} +
    h\nm{(I-R_h)v}_{ H_0^1(\Omega) } \lesssim
    h^{m+1} \nm{v}_{ H^{m+1}(\Omega) }.
  \]
\end{lem}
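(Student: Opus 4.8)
The plan is to follow the classical two-step argument for the Ritz (elliptic) projection: first bound the $H_0^1$-error by best approximation together with an interpolation estimate, then bound the $L^2$-error by an Aubin--Nitsche duality argument.

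First I would treat the energy-norm estimate. Since $\nabla(I-R_h)v$ is $L^2(\Omega)$-orthogonal to $\nabla\mathring V_h$, for any $v_h\in\mathring V_h$ we have
\[
  \nm{\nabla(v-R_hv)}_{L^2(\Omega)}^2
  = \big(\nabla(v-R_hv),\nabla(v-v_h)\big)_{L^2(\Omega)}
  \leqslant \nm{\nabla(v-R_hv)}_{L^2(\Omega)}\,\nm{\nabla(v-v_h)}_{L^2(\Omega)},
\]
so, recalling that the $H_0^1(\Omega)$-norm and the gradient seminorm are equivalent by the Poincar\'e inequality, $\nm{(I-R_h)v}_{H_0^1(\Omega)}\leqslant\inf_{v_h\in\mathring V_h}\nm{v-v_h}_{H_0^1(\Omega)}$. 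Taking $v_h=\Pi_h v$ to be the Scott--Zhang quasi-interpolant of $v$ in $\mathring V_h$, which preserves homogeneous boundary values and satisfies $\nm{v-\Pi_h v}_{H^1(\Omega)}\lesssim h^m\nm{v}_{H^{m+1}(\Omega)}$ on shape-regular meshes, we obtain $\nm{(I-R_h)v}_{H_0^1(\Omega)}\lesssim h^m\nm{v}_{H^{m+1}(\Omega)}$, which controls the second term on the left-hand side.

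Next I would handle the $L^2$-estimate by duality. Let $w\in H_0^1(\Omega)$ solve $-\Delta w=(I-R_h)v$ in $\Omega$; by $H^2$ elliptic regularity on $\Omega$ we have $\nm{w}_{H^2(\Omega)}\lesssim\nm{(I-R_h)v}_{L^2(\Omega)}$. Testing this problem against $(I-R_h)v$ and using the orthogonality defining $R_h$ to subtract $\Pi_h w\in\mathring V_h$,
\[
  \nm{(I-R_h)v}_{L^2(\Omega)}^2
  = \big(\nabla(I-R_h)v,\nabla(w-\Pi_h w)\big)_{L^2(\Omega)}
  \leqslant \nm{(I-R_h)v}_{H_0^1(\Omega)}\,\nm{w-\Pi_h w}_{H_0^1(\Omega)}.
\]
Since $\nm{w-\Pi_h w}_{H_0^1(\Omega)}\lesssim h\nm{w}_{H^2(\Omega)}\lesssim h\nm{(I-R_h)v}_{L^2(\Omega)}$, combining with the previous step and cancelling one factor of $\nm{(I-R_h)v}_{L^2(\Omega)}$ yields $\nm{(I-R_h)v}_{L^2(\Omega)}\lesssim h^{m+1}\nm{v}_{H^{m+1}(\Omega)}$; adding the two bounds gives the claim.

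The point needing care — more a technicality than a genuine obstacle — is the choice of interpolation operator: nodal (Lagrange) interpolation is unavailable in general because $H^{m+1}(\Omega)$ need not embed into $C(\overline\Omega)$ when $d=3$, so a Scott--Zhang (or Cl\'ement-type) quasi-interpolant mapping into $\mathring V_h$ must be used, and one must check it respects the zero boundary values. The duality argument moreover relies on $H^2$ elliptic regularity for the Poisson problem on $\Omega$, which is the only place the geometry of the polygon/polyhedron enters.
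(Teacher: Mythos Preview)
The paper does not prove this lemma; it simply declares it ``standard'' and cites Ciarlet. Your argument (C\'ea's lemma plus interpolation for the energy norm, Aubin--Nitsche duality for the $L^2$ norm) is precisely the classical proof behind that citation, so there is no divergence in approach.

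Two minor remarks. First, since the paper takes $m$ to be a positive integer, one has $H^{m+1}(\Omega)\hookrightarrow H^2(\Omega)\hookrightarrow C(\overline\Omega)$ for $d\leqslant 3$, so nodal interpolation would in fact be available; your choice of the Scott--Zhang operator is of course still valid and arguably cleaner. Second, your caveat about $H^2$ elliptic regularity is well placed: the paper only assumes $\Omega$ is a polygon/polyhedron, not a convex one, so the full $L^2$ rate in the lemma tacitly relies on a convexity (or smoothness) hypothesis the paper does not make explicit.
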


\begin{lem}
  \label{lem:spectral_err}
  If $ v \in H^\alpha(0,T) $ with $ \alpha > \gamma_0 $, then
  \[
    \inf_{q \in P_{M-1}[0,T]} \nm{v-q}_{H^{\gamma_0}(0,T)}
    \leqslant C M^{\gamma_0-\alpha} \nm{v}_{H^\alpha(0,T)}.
  \]
  If $ v \in H^2(0,T) $ such that $ v'' \in B^j(0,T) $ with $ j \in \mathbb N $,
  then
  \[
    \inf_{ q \in P_{M-1}[0,T] } \nm{v-q}_{ H^{1+\gamma_0}(0,T) }
    \leqslant C M^{ \gamma_0 - 1 - j } \nm{v''}_{B^j(0,T)}.
  \]
\end{lem}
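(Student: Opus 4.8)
The plan is to reduce both estimates, via the affine substitution $s = 2t/T - 1$, to polynomial approximation statements on the reference interval $(-1,1)$ — the substitution distorts every norm in play only by constants depending on $T$, which are absorbed into $C$ — and then to combine the norm equivalence $\nm{w}_{H^{\gamma_0}(0,T)} \sim \nm{D_{0+}^{\gamma_0}w}_{L^2(0,T)}$ furnished by \cref{lem:core} with the classical spectral approximation theory of Jacobi polynomials and generalized Jacobi functions (as in the references already cited in \cref{sec:algor}). Since $0 < \gamma_0 < 1/2$, the space $H^{\gamma_0}(0,T)$ carries no endpoint constraint, so the competing polynomials $q$ may range freely over $P_{M-1}[0,T]$.

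For the first estimate I would use a standard $hp$-type polynomial approximation bound on an interval: for $0 \leqslant \mu \leqslant \alpha$, $\inf_{q \in P_N[0,T]}\nm{v-q}_{H^\mu(0,T)} \leqslant C N^{\mu-\alpha}\nm{v}_{H^\alpha(0,T)}$, which follows from the decay of the Legendre (or Jacobi) expansion coefficients of $v$ — cleanest at integer smoothness indices and extended to the non-integer $\alpha$ at hand by function-space interpolation. Taking $\mu = \gamma_0$ and $N = M-1$ gives the claim. A more intrinsic route, closer to the mechanism of the algorithm, is to invoke \cref{lem:core} to replace the left-hand side by $\inf_q \nm{D_{0+}^{\gamma_0}(v-q)}_{L^2(0,T)}$, to note that $D_{0+}^{\gamma_0}$ maps $P_{M-1}[0,T]$ bijectively onto the generalized-Jacobi-function space $t^{-\gamma_0}P_{M-1}[0,T]$, and to bound the $L^2$-truncation error of $w := D_{0+}^{\gamma_0}v$ in that orthogonal system by the (weighted) smoothness inherited by $w$ from $v \in H^\alpha(0,T)$, yielding the rate $M^{-(\alpha-\gamma_0)}$.

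For the second estimate I would peel off one derivative. Pick $p \in P_{M-2}[0,T]$ realizing (up to a fixed factor) $\inf_{p \in P_{M-2}}\nm{v'-p}_{H^{\gamma_0}(0,T)}$ and set $q(t) := v(0) + \int_0^t p(s)\,\mathrm{d}s \in P_{M-1}[0,T]$; then $(v-q)(0)=0$ and $(v-q)' = v'-p$, so $\nm{v-q}_{H^{1+\gamma_0}(0,T)} \lesssim \nm{v-q}_{L^2(0,T)} + \nm{v'-p}_{H^{\gamma_0}(0,T)} \lesssim \nm{v'-p}_{H^{\gamma_0}(0,T)}$, using $\nm{v-q}_{L^2} \lesssim \nm{v'-p}_{L^2} \leqslant \nm{v'-p}_{H^{\gamma_0}}$. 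It then remains to show $\inf_{p \in P_{M-2}}\nm{v'-p}_{H^{\gamma_0}(0,T)} \leqslant C M^{\gamma_0-1-j}\nm{v''}_{B^j(0,T)}$. This is again an $H^{\gamma_0}$-approximation estimate, now with Jacobi-weighted control on the derivatives of $g := v'$: the hypothesis $v'' \in B^j(0,T)$ says exactly that $g^{(m)} = v^{(m+1)}$ lies (after scaling) in $L^2_{\omega^{m-1,m-1}}(-1,1)$ for $1 \leqslant m \leqslant j+1$, with $\omega^{a,b}(s) = (1-s)^a(1+s)^b$. Feeding this into the standard weighted Jacobi approximation theorem (for the $L^2$ part) and then into \cref{lem:core} or a further interpolation (for the $H^{\gamma_0}$ part) — equivalently, using the generalized-Jacobi-function estimates of \cite{chen2016generalized} — gives the rates $M^{-(j+1)}$ in $L^2$ and $M^{\gamma_0-(j+1)} = M^{\gamma_0-1-j}$ in $H^{\gamma_0}$, with $C$ depending on $j$ but bounded as $M \to \infty$.

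The routine ingredients are the scaling, the reduction through \cref{lem:core}, and the derivative-peeling in the second estimate. The technical heart of both parts is the same: passing from unweighted Sobolev regularity (first estimate) or endpoint-degenerate, Jacobi-weighted regularity $B^j$ (second estimate) of $v$ to sharp decay of the relevant Jacobi / generalized-Jacobi expansion coefficients, uniformly in $M$. I expect the non-integer smoothness index $\alpha$ in the first estimate to force a genuine interpolation-space argument, and the bookkeeping needed to obtain the $H^{\gamma_0}$-norm — rather than merely the $L^2$-norm — of the truncation error with the stated power of $M$ in the second estimate, to be the main obstacle.
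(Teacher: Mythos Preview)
Your approach is correct and matches the paper's: the authors do not give a detailed argument but simply state that \cref{lem:spectral_err} is ``trivial'' by \cite[Theorems~3.35--3.37]{Shen2011} together with ``the basic properties of the interpolation spaces,'' which is precisely your combination of Jacobi/Legendre approximation estimates in integer-order norms with function-space interpolation to reach the fractional index $\gamma_0$. Your derivative-peeling reduction for the second estimate and the identification of $v''\in B^j(0,T)$ with the Jacobi-weighted condition $g^{(m)}\in L^2_{\omega^{m-1,m-1}}$ are exactly the content of those cited theorems; the alternative route via \cref{lem:core} and generalized Jacobi functions is also valid but more than the paper invokes.
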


\begin{lem}
  \label{lem:basic-frac}
  The following properties hold:
  \begin{itemize}
    \item If $ 0 < \alpha, \beta < \infty $, then
      \[
        I_{0+}^\alpha I_{0+}^\beta = I_{0+}^{\alpha+\beta}, \quad
        I_{T-}^\alpha I_{T-}^\beta = I_{T-}^{\alpha+\beta}.
      \]
    \item If $ 0 < \alpha < \infty $, then
      \[
        \nm{I_{0+}^\alpha v}_{L^2(0,T)} \leqslant C \nm{v}_{L^2(0,T)}, \quad
        \nm{I_{T-}^\alpha v}_{L^2(0,T)} \leqslant C \nm{v}_{L^2(0,T)},
      \]
      where $ C $ is a positive constant that only depends on $ \alpha $ and $ T
      $.
    \item If $ 0 < \alpha < \infty $ and $ u,v \in L^2(0,T) $, then
      \[
        ( I_{0+}^\alpha u, v )_{L^2(0,T)} =
        ( u, I_{T-}^\alpha v )_{L^2(0,T)}.
      \]
  \end{itemize}
\end{lem}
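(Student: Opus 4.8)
The plan is to prove the three items in turn; all of them are classical and follow from elementary manipulations of the Riemann--Liouville kernels together with Fubini's theorem and Young's convolution inequality. For the semigroup identity, I would write $\left(I_{0+}^\alpha I_{0+}^\beta v\right)(t)$ as an iterated integral and, after noting that $(t-s)^{\alpha-1}(s-r)^{\beta-1}\snm{v(r)}$ is integrable on the simplex $\{0<r<s<t\}$ (immediate for $v \in L^1(0,T)$, since each kernel is locally integrable), apply Fubini to interchange the $s$- and $r$-integrations. The inner integral $\int_r^t (t-s)^{\alpha-1}(s-r)^{\beta-1}\,\mathrm ds$ is then evaluated by the substitution $s = r + (t-r)\sigma$, which turns it into $(t-r)^{\alpha+\beta-1}$ times the Beta integral $B(\beta,\alpha) = \Gamma(\alpha)\Gamma(\beta)/\Gamma(\alpha+\beta)$; collecting the constants yields exactly $\left(I_{0+}^{\alpha+\beta} v\right)(t)$. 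The identity for $I_{T-}$ follows from the change of variable $t \mapsto T-t$, which conjugates $I_{0+}^\alpha$ into $I_{T-}^\alpha$.

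For the $L^2$-boundedness, I would observe that, upon extending $v$ by zero outside $(0,T)$, one has $I_{0+}^\alpha v = k_\alpha * v$ on $(0,T)$, where $k_\alpha(t) := t^{\alpha-1}/\Gamma(\alpha)$ for $t>0$ and $k_\alpha(t):=0$ otherwise. Since $\nm{k_\alpha}_{L^1(0,T)} = T^\alpha/\Gamma(\alpha+1) < \infty$ for every $\alpha>0$, Young's convolution inequality gives $\nm{I_{0+}^\alpha v}_{L^2(0,T)} \leqslant \big(T^\alpha/\Gamma(\alpha+1)\big)\nm{v}_{L^2(0,T)}$, so one may take $C = T^\alpha/\Gamma(\alpha+1)$, which depends only on $\alpha$ and $T$. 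The bound for $I_{T-}^\alpha$ is obtained identically with the reflected kernel.

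For the adjoint relation, I would expand $(I_{0+}^\alpha u, v)_{L^2(0,T)}$ as a double integral over $\{0<s<t<T\}$. By the second item applied to $\snm{u}$, the function $I_{0+}^\alpha \snm{u}$ belongs to $L^2(0,T)$, hence $(I_{0+}^\alpha \snm{u}, \snm{v})_{L^2(0,T)} < \infty$, which is precisely the Tonelli estimate needed to invoke Fubini; interchanging the order of integration and relabelling identifies the inner integral $\frac1{\Gamma(\alpha)}\int_s^T (t-s)^{\alpha-1} v(t)\,\mathrm dt$ as $(I_{T-}^\alpha v)(s)$, giving $(u, I_{T-}^\alpha v)_{L^2(0,T)}$. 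None of the steps is a genuine obstacle; the only points requiring care are the three applications of Fubini's theorem, and each is covered by a one-line integrability check (from $v \in L^1(0,T)$ for the first item, and from the second item for the third), so I would state these explicitly but not dwell on them.
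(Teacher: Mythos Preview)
Your proposal is correct in every detail: the Beta-integral evaluation for the semigroup law, the Young-inequality bound with the explicit constant $T^\alpha/\Gamma(\alpha+1)$, and the Fubini swap for the adjoint identity are the standard arguments, and your integrability checks are exactly what is needed to justify them. The paper, for its part, does not prove this lemma at all but simply refers to \cite{Samko1993, Podlubny1998}; the proofs you sketch are essentially those one finds in these references, so there is no substantive difference in approach to comment on.
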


\begin{lem}
  \label{lem:core}
  If $ v \in H^{\gamma_0}(0,T) $, then
  \begin{align*}
    \nm{v}_{H^{\gamma_0}(0,T)} \sim
    \nm{{D_{0+}^{\gamma_0}}v}_{L^2(0,T)} \sim
    \nm{ {D_{T-}^{\gamma_0}} v}_{L^2(0,T)} \sim
    \sqrt{
      \left(
        {D_{0+}^{\gamma_0}}v, {D_{T-}^{\gamma_0}} v
      \right)_{L^2(0,T)}
    }.
  \end{align*}
\end{lem}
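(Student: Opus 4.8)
The plan is to establish the chain of equivalences by working first with the norm equivalences $\nm{v}_{H^{\gamma_0}(0,T)} \sim \nm{D_{0+}^{\gamma_0}v}_{L^2(0,T)}$ and $\nm{v}_{H^{\gamma_0}(0,T)} \sim \nm{D_{T-}^{\gamma_0}v}_{L^2(0,T)}$, and then handling the bilinear quantity $\left(D_{0+}^{\gamma_0}v, D_{T-}^{\gamma_0}v\right)_{L^2(0,T)}$. Since $0 < \gamma_0 = (\gamma-1)/2 < 1/2$, we are in the regime where $H^{\gamma_0}(0,T) = H_0^{\gamma_0}(0,T)$ (no trace obstruction), so the fractional derivatives $D_{0+}^{\gamma_0}$ and $D_{T-}^{\gamma_0}$ extend to bounded operators on this space. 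The first two equivalences are essentially classical (see e.g. the characterization of fractional Sobolev spaces via Riemann--Liouville operators in the references the paper already cites); I would either invoke such a result or sketch it: boundedness of $D_{0+}^{\gamma_0}: H^{\gamma_0}(0,T) \to L^2(0,T)$ follows by interpolation between $D_{0+}^0 = I$ on $L^2$ and the bound on $H^1$, while the reverse inequality $\nm{v}_{H^{\gamma_0}} \lesssim \nm{D_{0+}^{\gamma_0}v}_{L^2}$ comes from writing $v = I_{0+}^{\gamma_0} D_{0+}^{\gamma_0} v$ (valid for $\gamma_0 < 1/2$, since there is no polynomial ambiguity) together with the mapping property $I_{0+}^{\gamma_0}: L^2(0,T) \to H^{\gamma_0}(0,T)$, which can itself be obtained by Fourier/interpolation arguments. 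The same reasoning applied to the right endpoint gives the $D_{T-}^{\gamma_0}$ equivalence.

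For the last equivalence, the upper bound $\left(D_{0+}^{\gamma_0}v, D_{T-}^{\gamma_0}v\right)_{L^2(0,T)} \lesssim \nm{v}_{H^{\gamma_0}(0,T)}^2$ is immediate from Cauchy--Schwarz and the first two equivalences. The substantive direction is the coercivity-type lower bound $\nm{v}_{H^{\gamma_0}(0,T)}^2 \lesssim \left(D_{0+}^{\gamma_0}v, D_{T-}^{\gamma_0}v\right)_{L^2(0,T)}$; in particular one must show this inner product is nonnegative. I would use the composition identity from \cref{lem:basic-frac}: by the adjoint relation and $I_{0+}^{\gamma_0}I_{0+}^{\gamma_0} = I_{0+}^{2\gamma_0}$, write $v = I_{0+}^{\gamma_0} w$ with $w = D_{0+}^{\gamma_0}v$, so that
\[
  \left(D_{0+}^{\gamma_0}v,\, D_{T-}^{\gamma_0}v\right)_{L^2(0,T)}
  = \left(w,\, D_{T-}^{\gamma_0} I_{0+}^{\gamma_0} w\right)_{L^2(0,T)},
\]
and the key point is that the operator $D_{T-}^{\gamma_0} I_{0+}^{\gamma_0}$ (a fractional-order "mismatched" composition) is, up to a positive constant, a positive operator on $L^2(0,T)$ with square root comparable to $I_{0+}^{\gamma_0}$ in norm. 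Equivalently, passing to the Fourier cosine/sine transform on $(0,T)$ after reflecting, the operator $D_{0+}^{\gamma_0}\!\,^*D_{T-}^{\gamma_0}$ has symbol behaving like $|\xi|^{2\gamma_0}$ up to a bounded, bounded-below, real-part-positive factor $e^{i\pi\gamma_0\,\mathrm{sgn}\xi}$ type correction; the real part being bounded below by $\cos(\pi\gamma_0) > 0$ (since $\gamma_0 < 1/2$) yields both nonnegativity and the desired coercivity. This reduces to the scalar inequality $\mathrm{Re}\, e^{i\pi\gamma_0} \geqslant \cos(\pi\gamma_0)$, which is where the restriction $\gamma_0 \in (0,1/2)$, i.e. $\gamma \in (1,2)$, is used crucially.

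The main obstacle I anticipate is making the positivity argument for $\left(D_{0+}^{\gamma_0}v, D_{T-}^{\gamma_0}v\right)_{L^2(0,T)}$ rigorous on the bounded interval $(0,T)$ rather than on the whole line: the clean Fourier-symbol picture holds on $\mathbb{R}$, but on $(0,T)$ the two endpoint operators interact through boundary terms, so one must argue either by a density/extension argument (extend $v$ by zero, noting $\gamma_0 < 1/2$ makes the zero-extension belong to $H^{\gamma_0}(\mathbb{R})$) and use the known whole-line identity, or by a direct kernel estimate on $D_{T-}^{\gamma_0} I_{0+}^{\gamma_0}$ exhibiting it as a nonnegative self-adjoint operator. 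I would favor the zero-extension route: for $\gamma_0 \in (0,1/2)$ one has $\widetilde{H^{\gamma_0}}(0,T) = H^{\gamma_0}(0,T)$ with equivalent norms, the Riemann--Liouville operators on $(0,T)$ coincide (after extension) with the restrictions of the line operators, and the whole-line Plancherel computation then delivers nonnegativity and the two-sided bound in one stroke. The remaining details — tracking constants, confirming the density of smooth functions, and checking that all compositions in \cref{lem:basic-frac} apply in $L^2$ rather than merely $L^1$ — are routine.
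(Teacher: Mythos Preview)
The paper does not actually prove \cref{lem:core}; it simply records it as a direct consequence of \cite[Lemma~2.4, Theorem~2.13 and Corollary~2.15]{Ervin2006}. Your plan is correct and is essentially a reconstruction of the Ervin--Roop argument: the norm equivalences via the identification $H^{\gamma_0}(0,T)=H_0^{\gamma_0}(0,T)$ for $\gamma_0<1/2$ together with the mapping properties of $I_{0+}^{\gamma_0}$, and the coercivity of $(D_{0+}^{\gamma_0}v,D_{T-}^{\gamma_0}v)_{L^2}$ via zero-extension to $\mathbb{R}$, Plancherel, and the fact that $\cos(\pi\gamma_0)>0$; the support observation that $D_{-\infty}^{\gamma_0}\tilde v\cdot D_{+\infty}^{\gamma_0}\tilde v$ vanishes off $(0,T)$ makes the passage from the interval to the line clean, so no genuine obstacle remains.
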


\begin{lem}
  \label{lem:AQ}
  Let $ X $ and $ Y $ be two separable Hilbert spaces, and let $ A: X \to Y $ be
  a bounded linear operator. If $ v \in H^{1+\gamma_0}(0,T; X) $, then
  \[
    A Q_M^X v = Q_M^Y Av.
  \]
\end{lem}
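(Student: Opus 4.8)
The plan is to prove \cref{lem:AQ} by reducing the vector-valued statement to the scalar case via the series representation noted in the second remark after the definition of $Q_M^X$. First I would fix an orthonormal basis $\{e_k: k\in\mathbb N\}$ of $X$ and an orthonormal basis $\{f_l: l\in\mathbb N\}$ of $Y$. Since $A:X\to Y$ is bounded and linear, for $v\in H^{1+\gamma_0}(0,T;X)$ the composition $Av$ lies in $H^{1+\gamma_0}(0,T;Y)$, because each scalar component $(Av(\cdot),f_l)_Y = (v(\cdot), A^*f_l)_X$ is a finite linear combination — more precisely an $\ell^2$-convergent series — of the components $(v(\cdot),e_k)_X \in H^{1+\gamma_0}(0,T)$, and boundedness of $A$ (hence of $A^*$) controls the $H^{1+\gamma_0}$-norms. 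This puts $Av$ in the domain of $Q_M^Y$, so both sides of the claimed identity make sense.

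Next I would compute both sides. Using $Q_M^X v = \sum_k Q_M^{\mathbb R}(v,e_k)_X\, e_k$ from the remark, and the fact that $A$ commutes with the $X$-valued integral (equivalently, $A$ applied termwise to the series), I get
\[
  A Q_M^X v = \sum_{k=0}^\infty Q_M^{\mathbb R}(v,e_k)_X \, A e_k.
\]
For the other side, $Q_M^Y Av = \sum_{l=0}^\infty Q_M^{\mathbb R}(Av, f_l)_Y\, f_l$. The key point is linearity of the scalar interpolation operator $Q_M^{\mathbb R}$ together with the identity $(Av(\cdot), f_l)_Y = \sum_k (v(\cdot),e_k)_X\, (Ae_k, f_l)_Y$ (an $\ell^2$-convergent expansion in $H^{1+\gamma_0}(0,T)$), which upon applying $Q_M^{\mathbb R}$ and projecting $A e_k = \sum_l (Ae_k,f_l)_Y f_l$ shows both expressions have the same $f_l$-coefficients. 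Alternatively, and perhaps more cleanly, I would verify directly that $A Q_M^X v$ satisfies the defining conditions of $Q_M^Y (Av)$: evaluating at $t=0$ gives $(AQ_M^Xv)(0) = A(Q_M^Xv)(0) = A v(0) = (Av)(0)$; and for the orthogonality condition, for any $q\in P_{M-1}[0,T;Y]$ one writes $q = \sum_l q_l f_l$ with $q_l\in P_{M-1}[0,T]$, then uses that $D_{0+}^{\gamma_0}$ and $D_{T-}^{\gamma_0}$ act componentwise and commute with the bounded operator $A$, reducing $\int_0^T (D_{0+}^{\gamma_0}(Av - AQ_M^Xv)', D_{T-}^{\gamma_0}q)_Y\,\mathrm dt$ to a sum over $k$ and $l$ of scalar integrals of the form $\int_0^T D_{0+}^{\gamma_0}(v_k - Q_M^{\mathbb R}v_k)' \, D_{T-}^{\gamma_0}\tilde q_{k,l}\,\mathrm dt$ with $\tilde q_{k,l}\in P_{M-1}[0,T]$, each vanishing by the scalar defining property of $Q_M^{\mathbb R}$. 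Uniqueness of the interpolant (guaranteed since $Q_M^Y$ is well-defined by \cref{lem:core}) then forces $AQ_M^Xv = Q_M^Y Av$.

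The main obstacle I anticipate is purely one of bookkeeping rather than depth: justifying that a bounded linear operator commutes with the infinite series representing $Q_M^X v$ and with the $X$-valued weak derivatives and fractional integral operators, and that all the interchanges of $A$ with summation are legitimate in the relevant Bochner and Sobolev norms. This rests on continuity of $A$ together with the fact that $Q_M^{\mathbb R}$ is a bounded linear map on $H^{1+\gamma_0}(0,T)$ (a consequence of \cref{lem:core} and the variational definition), so that $\sum_k \nm{Q_M^{\mathbb R}(v,e_k)_X}_{H^{1+\gamma_0}(0,T)}^2 \lesssim \sum_k \nm{(v,e_k)_X}_{H^{1+\gamma_0}(0,T)}^2 < \infty$ and the series for $AQ_M^Xv$ converges in $H^{1+\gamma_0}(0,T;Y)$. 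Once these technical interchanges are in place, the identity follows immediately from linearity and uniqueness, so I would keep that part of the argument brief and concentrate the exposition on the verification of the two defining conditions.
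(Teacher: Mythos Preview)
Your proposal is correct and aligns with what the paper indicates: the paper omits the proof entirely, remarking only that it is ``tedious but straightforward'' from \cref{lem:core} and the basic properties of interpolation spaces and Bochner integrals. Your argument---verifying that $AQ_M^Xv$ satisfies the two defining conditions of $Q_M^Y(Av)$ and invoking uniqueness, with the bookkeeping justified by boundedness of $Q_M^{\mathbb R}$ on $H^{1+\gamma_0}(0,T)$ via \cref{lem:core}---is precisely the routine verification the authors had in mind.
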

\noindent \cref{lem:R_h} is standard \cite{Ciarlet2002}, and, by
\cite[Theorems~3.35--3.37]{Shen2011} and the basic properties of the
interpolation spaces, \cref{lem:spectral_err} is trivial. The proof of
\cref{lem:basic-frac} is included in \cite{Samko1993, Podlubny1998}, and this
lemma will be used implicitly in the forthcoming analysis for convenience.
\cref{lem:core} is a direct consequence of~\cite[Lemma~2.4, Theorem~2.13 and
Corollary~2.15]{Ervin2006}. Finally, by \cref{lem:core} and the basic properties
of the interpolation spaces and the Bochner integrals, a rigorous proof of
\cref{lem:AQ} is tedious but straightforward, and so it is omitted here.

Then let us state three crucial lemmas as follows.
\begin{lem}
  \label{lem:base}
  If $ v \in H^2(0,T) $ and $ w \in H^1(0,T) $, then
  \begin{equation}
    \label{eq:base}
    \left(
      {D_{0+}^\gamma} (v - v(0) - tv'(0), w
    \right)_{L^2(0,T)} =
    \left(
      {D_{0+}^{\gamma_0}} (v' - v'(0)),
      {D_{T-}^{\gamma_0}} w
    \right)_{L^2(0,T)}.
  \end{equation}
\end{lem}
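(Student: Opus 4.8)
The plan is to reduce the identity to a statement purely about Riemann--Liouville integral operators and their adjointness, exploiting the fact that for $j=1$ the operator $D_{0+}^\gamma$ on $v-v(0)-tv'(0)$ strips off exactly the terms needed to make the remaining function have vanishing value and first derivative at $t=0$. First I would set $\tilde v := v - v(0) - tv'(0)$, so that $\tilde v(0)=0$ and $\tilde v'(0)=0$, and write $D_{0+}^\gamma \tilde v = D^2 I_{0+}^{2-\gamma}\tilde v$ (using $1<\gamma<2$, so $j=2$). The key observation is that since $\tilde v \in H^2(0,T)$ with two vanishing initial conditions, one can integrate by parts inside the fractional integral to obtain $I_{0+}^{2-\gamma}\tilde v = I_{0+}^{2-\gamma} I_{0+}^2 \tilde v'' = I_{0+}^{4-\gamma}\tilde v''$ via the semigroup property (\cref{lem:basic-frac}, first bullet), hence $D_{0+}^\gamma \tilde v = D^2 I_{0+}^{4-\gamma}\tilde v'' = I_{0+}^{2-\gamma}\tilde v''$. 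Equivalently, and more directly, I would commute the derivative: $D_{0+}^\gamma\tilde v = I_{0+}^{2-\gamma}\tilde v''$ because the boundary terms from differentiating $I_{0+}^{2-\gamma}\tilde v$ twice vanish precisely due to $\tilde v(0)=\tilde v'(0)=0$.

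Next I would rewrite the left-hand side. Since $\tilde v'' = v''$ and $\tilde v' = v' - v'(0)$, we have
\[
  \left( D_{0+}^\gamma \tilde v, w \right)_{L^2(0,T)}
  = \left( I_{0+}^{2-\gamma} v'', w \right)_{L^2(0,T)}.
\]
Now I want to match this with the right-hand side $\left( D_{0+}^{\gamma_0}(v'-v'(0)), D_{T-}^{\gamma_0} w \right)_{L^2(0,T)}$. Writing $\gamma_0 = (\gamma-1)/2$, so $1-\gamma_0 = (3-\gamma)/2$ and $2-\gamma = 2(1-\gamma_0) - 1$ — hmm, let me instead note $2\gamma_0 = \gamma-1$, so $2-\gamma = 1 - 2\gamma_0$. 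Since $0<\gamma_0<1/2$, the function $v'-v'(0)\in H^1(0,T)$ vanishes at $0$, so by the same boundary-term argument $D_{0+}^{\gamma_0}(v'-v'(0)) = D I_{0+}^{1-\gamma_0}(v'-v'(0)) = I_{0+}^{1-\gamma_0}\big((v'-v'(0))'\big) = I_{0+}^{1-\gamma_0} v''$. Then using adjointness (\cref{lem:basic-frac}, third bullet) and the semigroup property,
\[
  \left( D_{0+}^{\gamma_0}(v'-v'(0)), D_{T-}^{\gamma_0} w \right)_{L^2(0,T)}
  = \left( I_{0+}^{1-\gamma_0} v'', D_{T-}^{\gamma_0} w \right)_{L^2(0,T)}
  = \left( v'', I_{T-}^{1-\gamma_0} D_{T-}^{\gamma_0} w \right)_{L^2(0,T)}.
\]
I would then show $I_{T-}^{1-\gamma_0} D_{T-}^{\gamma_0} w = I_{T-}^{2-\gamma} w = I_{T-}^{1-2\gamma_0} w$ is not quite it either — rather I should move only one of the $\gamma_0$'s: using $D_{T-}^{\gamma_0} = -D I_{T-}^{1-\gamma_0}$ and $I_{T-}^{1-\gamma_0}I_{T-}^{1-\gamma_0} = I_{T-}^{2-2\gamma_0} = I_{T-}^{3-\gamma}$, combined with a transfer of derivatives back onto $v''$, the two sides should collapse to the common expression $\left( I_{0+}^{2-\gamma} v'', w\right)_{L^2(0,T)}$. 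The cleanest route is probably to verify both sides equal $(I_{0+}^{1-\gamma_0}v'', I_{0+}^{1-\gamma_0}w)$-type quantity: apply adjointness once on the right to land on $(v'', I_{T-}^{1-\gamma_0}D_{T-}^{\gamma_0}w)$ and separately rewrite the left via $I_{0+}^{2-\gamma} = I_{0+}^{1-\gamma_0}I_{0+}^{1-\gamma_0}$ (since $2(1-\gamma_0)=2-\gamma$) and adjointness to get $(v'', I_{T-}^{1-\gamma_0}I_{0+}^{1-\gamma_0\,*}\cdots)$ — at this point I must be careful, and the honest statement is that both reduce to $(I_{0+}^{1-\gamma_0}v'', \text{(something involving only }w))$ where the something is $I_{T-}^{1-\gamma_0}w$ after one adjoint step, i.e. I should show
\[
  I_{T-}^{1-\gamma_0} D_{T-}^{\gamma_0} w = D_{T-}^{\gamma_0} I_{T-}^{1-\gamma_0} w \quad\text{in the relevant pairing,}
\]
and then regroup.

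The main obstacle I anticipate is the justification of the boundary-term cancellations when commuting $D$ past the fractional integrals: the formal identities $D_{0+}^\alpha(g) = I_{0+}^{\lceil\alpha\rceil-\alpha} g^{(\lceil\alpha\rceil)}$ hold only when $g$ and enough of its derivatives vanish at $t=0$, and similarly for $D_{T-}^{\gamma_0}$ one needs regularity/decay of $w$ near $t=T$. Here $w \in H^1(0,T)$ only, so $D_{T-}^{\gamma_0} w = -D I_{T-}^{1-\gamma_0} w$ need not simplify by pulling the derivative inside (no vanishing condition at $T$ is assumed), which is why the manipulation must stay inside the $L^2$ inner product and use the adjoint relations of \cref{lem:basic-frac} rather than pointwise identities for $D_{T-}^{\gamma_0} w$. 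Concretely, I would keep $w$ untouched, push all derivatives and left-integrals onto the smooth factor $v$ (which does satisfy the needed vanishing conditions after subtracting $v(0)+tv'(0)$), and only at the end use $(I_{0+}^\alpha u, w) = (u, I_{T-}^\alpha w)$ and $D_{T-}^{\gamma_0} = -DI_{T-}^{1-\gamma_0}$ together with an integration by parts in $t$ — where the boundary term at $t=T$ vanishes because it carries a factor $I_{T-}^{1-\gamma_0+\varepsilon}(\cdot)(T)=0$, and the boundary term at $t=0$ vanishes because $v''$ is paired against $I_{0+}$-type quantities that vanish at $0$. Managing these endpoint contributions carefully, and bookkeeping the exponents $1-\gamma_0$, $2-\gamma$, $2-2\gamma_0$ so that they match, is the only real work; everything else is the semigroup property and adjointness from \cref{lem:basic-frac}.
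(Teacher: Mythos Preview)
Your approach is sound and, once the bookkeeping is cleaned up, it goes through; but it differs from what the paper does. The paper's argument is two lines: first observe by direct calculation that
\[
  D_{0+}^\gamma\big(v-v(0)-tv'(0)\big)=D_{0+}^{\gamma-1}\big(v'-v'(0)\big)=D_{0+}^{2\gamma_0}\big(v'-v'(0)\big),
\]
and then invoke \cite[Lemma~2.6]{Li2009}, which is precisely the ``splitting'' identity $(D_{0+}^{2\gamma_0}g,w)_{L^2}=(D_{0+}^{\gamma_0}g,D_{T-}^{\gamma_0}w)_{L^2}$. You are, in effect, re-deriving that cited lemma from the semigroup and adjointness properties in \cref{lem:basic-frac}, which makes your argument self-contained at the price of more manipulation.

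One remark on your execution: you correctly reduced both sides to $(I_{0+}^{2-\gamma}v'',w)$ and $(I_{0+}^{1-\gamma_0}v'',D_{T-}^{\gamma_0}w)=(v'',I_{T-}^{1-\gamma_0}D_{T-}^{\gamma_0}w)$, and then wrote that $I_{T-}^{1-\gamma_0}D_{T-}^{\gamma_0}w=I_{T-}^{1-2\gamma_0}w$ ``is not quite it either'' --- but in fact that \emph{is} exactly the identity you need, and it is valid here. Since $0<\gamma_0<1/2$, one has $H^{\gamma_0}(0,T)=H_0^{\gamma_0}(0,T)=I_{T-}^{\gamma_0}(L^2(0,T))$ (this is behind \cref{lem:core}), so for $w\in H^1(0,T)\subset H^{\gamma_0}(0,T)$ one may write $w=I_{T-}^{\gamma_0}\psi$ with $\psi\in L^2$, whence $I_{T-}^{\gamma_0}D_{T-}^{\gamma_0}w=w$ and therefore $I_{T-}^{1-\gamma_0}D_{T-}^{\gamma_0}w=I_{T-}^{1-2\gamma_0}I_{T-}^{\gamma_0}D_{T-}^{\gamma_0}w=I_{T-}^{2-\gamma}w$. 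Adjointness then gives $(v'',I_{T-}^{2-\gamma}w)=(I_{0+}^{2-\gamma}v'',w)$ and the proof closes. Your worry about endpoint contributions at $t=T$ is thus handled not by integration by parts on $w$, but by this range characterization of $H^{\gamma_0}$; no vanishing condition on $w$ at $T$ is needed.
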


\begin{lem}
  \label{lem:jm}
  If $ v \in H^2(0,T) $ and $ w \in H^{\gamma_0}(0,T) $, then
  \begin{equation}
    \label{eq:jm}
    \big( (I-Q_M)v, w \big)_{L^2(0,T)} \lesssim
    C M^{-1-2\gamma_0} \nm{(I-Q_M)v}_{H^{1+\gamma_0}(0,T)}
    \nm{w}_{H^{\gamma_0}(0,T)}.
  \end{equation}
\end{lem}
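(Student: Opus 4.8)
The plan is an Aubin--Nitsche duality argument that exploits the Galerkin orthogonality built into $ Q_M $. Set $ e := (I-Q_M)v $; since $ v \in H^2(0,T) $ and $ Q_M v \in P_M[0,T] $, we have $ e \in H^2(0,T) $ with $ e(0)=0 $, and by the definition of $ Q_M $,
\[
  \int_0^T D_{0+}^{\gamma_0} e' \, D_{T-}^{\gamma_0} q \, \mathrm{d}t = 0
  \qquad \text{for all } q \in P_{M-1}[0,T].
\]
The goal is to bound the left-hand side of \cref{eq:jm} by the ``energy'' quantity $ \nm{e}_{H^{1+\gamma_0}(0,T)}\nm{w}_{H^{\gamma_0}(0,T)} $ times the extra small factor $ M^{-1-2\gamma_0} $; it suffices to bound the absolute value.

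First I would move $ D_{0+}^{\gamma_0} e' $ into one slot of the inner product. Because $ e(0)=0 $, the operator identity $ e = I_{0+}^{1+\gamma_0} D_{0+}^{\gamma_0} e' $ holds --- it combines $ e = I_{0+}^{1} e' $ with the fact that $ I_{0+}^{1+\gamma_0}D = I_{0+}^{\gamma_0} $ on functions vanishing at the origin, and $ I_{0+}^{1-\gamma_0} e' $ does vanish there. Hence, by the adjointness in \cref{lem:basic-frac},
\[
  \big( (I-Q_M)v, w \big)_{L^2(0,T)}
  = \big( D_{0+}^{\gamma_0} e', \, I_{T-}^{1+\gamma_0} w \big)_{L^2(0,T)} .
\]
Next I would introduce the dual function $ z := I_{T-}^{1+2\gamma_0} w $; the semigroup property of $ I_{T-} $ (\cref{lem:basic-frac}) together with $ D_{T-}^{\gamma_0} I_{T-}^{\gamma_0} = \mathrm{id} $ gives $ D_{T-}^{\gamma_0} z = I_{T-}^{1+\gamma_0} w $, so that for every $ q \in P_{M-1}[0,T] $ the displayed orthogonality yields
\[
  \big( (I-Q_M)v, w \big)_{L^2(0,T)}
  = \big( D_{0+}^{\gamma_0} e', \, D_{T-}^{\gamma_0}(z-q) \big)_{L^2(0,T)} .
\]

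Then the Cauchy--Schwarz inequality, \cref{lem:core} applied to $ e' $ and to $ z-q $, and $ \nm{e'}_{H^{\gamma_0}(0,T)} \lesssim \nm{e}_{H^{1+\gamma_0}(0,T)} $ give
\[
  \big| \big( (I-Q_M)v, w \big)_{L^2(0,T)} \big|
  \lesssim \nm{(I-Q_M)v}_{H^{1+\gamma_0}(0,T)}
  \inf_{q \in P_{M-1}[0,T]} \nm{z-q}_{H^{\gamma_0}(0,T)} .
\]
To the infimum I would apply the first estimate of \cref{lem:spectral_err} with $ \alpha = 1+3\gamma_0 > \gamma_0 $, which produces the factor $ CM^{\gamma_0-(1+3\gamma_0)} = CM^{-1-2\gamma_0} $ times $ \nm{z}_{H^{1+3\gamma_0}(0,T)} $. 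The lemma then follows from the regularity estimate
\[
  \nm{z}_{H^{1+3\gamma_0}(0,T)}
  = \nm{I_{T-}^{1+2\gamma_0} w}_{H^{1+3\gamma_0}(0,T)}
  \lesssim \nm{w}_{H^{\gamma_0}(0,T)} .
\]

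The main obstacle is precisely this last estimate, namely that $ I_{T-}^{1+2\gamma_0} $ raises the Sobolev index by exactly $ 1+2\gamma_0 $ on $ H^{\gamma_0}(0,T) $. Factoring $ I_{T-}^{1+2\gamma_0} = I_{T-}^{1}\circ I_{T-}^{2\gamma_0} $ and using that the ordinary antiderivative $ I_{T-}^{1} $ raises the index by one without obstruction, it reduces to boundedness of $ I_{T-}^{2\gamma_0}:H^{\gamma_0}(0,T)\to H^{3\gamma_0}(0,T) $. Here the hypothesis $ 1<\gamma<2 $, i.e.\ $ 0<\gamma_0<\tfrac12 $, is essential: the boundary layer of $ I_{T-}^{2\gamma_0}w $ at $ t=T $ is of type $ (T-t)^{2\gamma_0} $, which lies in $ H^{3\gamma_0} $ near $ T $ exactly because $ 3\gamma_0<2\gamma_0+\tfrac12 $, while away from $ t=T $ the operator smooths by order $ 2\gamma_0 $; the quantitative bound then follows by interpolation from the endpoint mapping properties of Riemann--Liouville integrals (see \cite{Samko1993}, or the results of \cite{Ervin2006} underlying \cref{lem:core}). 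Carefully verifying these mapping properties near the half-integer threshold is where the real work lies --- the operator identities used above are routine once one tracks the vanishing of $ e $, of $ I_{0+}^{1-\gamma_0}e' $ and of $ I_{T-}^{\bullet}w $ at the relevant endpoints.
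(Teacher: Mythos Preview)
Your proposal is correct and follows essentially the same route as the paper: the same duality identity $(e,w)_{L^2}=\big(D_{0+}^{\gamma_0}e',\,D_{T-}^{\gamma_0}I_{T-}^{1+2\gamma_0}w\big)_{L^2}$, the same use of the Galerkin orthogonality of $Q_M$ together with \cref{lem:spectral_err} at exponent $1+3\gamma_0$, and the same reduction of the regularity step to boundedness of $I_{T-}^{2\gamma_0}:H^{\gamma_0}(0,T)\to H^{3\gamma_0}(0,T)$. The only cosmetic difference is that the paper derives the duality identity by an explicit integration by parts on $I_{0+}^{1-\gamma_0}e'$ rather than via your operator identity $e=I_{0+}^{1+\gamma_0}D_{0+}^{\gamma_0}e'$; for the final mapping bound the paper supplies the concrete endpoints you allude to, namely $I_{T-}^{2\gamma_0}:L^2\to H^{2\gamma_0}$ (\cref{lem:regu}, proved via the Fourier transform on $\mathbb R$) and $I_{T-}^{2\gamma_0}:H_0^1\to H^{1+2\gamma_0}$ (from $(I_{T-}^{2\gamma_0}w)'=I_{T-}^{2\gamma_0}w'$), and then interpolates.
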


\begin{lem}
  \label{lem:I-Q}
  If $ v \in H^2(0,T) $ and $ v'' \in B^j(0,T) $ with $ j \in \mathbb N $, then
  \begin{align}
    \nm{(I-Q_M)v}_{H^{1+\gamma_0}(0,T)} & \lesssim
    C M^{\gamma_0-1-j} \nm{v''}_{B^j(0,T)},
    \label{eq:I-Qfrac} \\
    \nm{(I-Q_M)v}_{L^2(0,T)} & \lesssim
    C M^{-2-j} \nm{v''}_{B^j(0,T)},
    \label{eq:I-Q0} \\
    \nm{(I-Q_M)v}_{C[0,T]} & \lesssim
    C M^{-1.5-j} \nm{v''}_{B^j(0,T)}.
    \label{eq:I-Qinf}
  \end{align}
\end{lem}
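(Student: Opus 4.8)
The plan is to reduce all three estimates to a single spectral-approximation bound for $Q_M$ combined with a duality argument, then invoke \cref{lem:spectral_err} and \cref{lem:core} to convert everything into the weighted $B^j$-norm of $v''$. First I would establish \cref{eq:I-Qfrac} directly: the defining relation of $Q_M$ says that $(v-Q_Mv)(0)=0$ and $D_{0+}^{\gamma_0}(v-Q_Mv)'$ is $L^2$-orthogonal to $D_{T-}^{\gamma_0}P_{M-1}[0,T]$, so by \cref{lem:core} the quantity $\nm{(v-Q_Mv)'}_{H^{\gamma_0}(0,T)}$ is equivalent to $\sqrt{(D_{0+}^{\gamma_0}(v-Q_Mv)',D_{T-}^{\gamma_0}(v-Q_Mv)')}$, and the orthogonality lets me replace the second factor's argument by $(v-q)'$ for any $q\in P_M[0,T]$ with $q(0)=v(0)$, i.e.\ $Q_M$ is (up to the norm-equivalence constants) the optimal projector in the $H^{1+\gamma_0}$-seminorm; since the value at $0$ is matched and we control the seminorm, a Poincaré-type argument upgrades this to the full norm. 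Then \cref{eq:I-Qfrac} follows from the second bound in \cref{lem:spectral_err} with $q=j$.

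Next I would obtain \cref{eq:I-Q0} by an Aubin--Nitsche duality argument. Given $g\in L^2(0,T)$, I introduce the dual problem whose solution $\phi$ satisfies a relation of the form $D_{T-}^{\gamma_0}(D_{0+}^{\gamma_0}\phi')=g$ with the appropriate endpoint condition, so that $((I-Q_M)v,g)_{L^2(0,T)}=(D_{0+}^{\gamma_0}((I-Q_M)v)',D_{T-}^{\gamma_0}(\phi-q)')$ for any admissible $q\in P_{M-1}$ (using the Galerkin orthogonality of $Q_M$ to subtract $q$). Bounding by Cauchy--Schwarz and \cref{lem:core} gives $\nm{(I-Q_M)v}_{L^2}\lesssim \nm{(I-Q_M)v}_{H^{1+\gamma_0}}\cdot\inf_q\nm{(\phi-q)'}_{H^{\gamma_0}}$, and the first bound of \cref{lem:spectral_err} applied to $\phi'$ (which picks up one extra power of $M^{-1}$ beyond what regularity of $g$ alone would give, because $\phi$ gains smoothness of order roughly $1+2\gamma_0$ over $g$) produces a factor $M^{-1-2\gamma_0}\nm{g}_{L^2}$; combining with \cref{eq:I-Qfrac} and taking the supremum over $g$ yields the exponent $\gamma_0-1-j-1-2\gamma_0=-2-j-2\gamma_0\le -2-j$ — so the stated $M^{-2-j}$ is in fact a (possibly lossy) consequence, which is fine since $\gamma_0>0$. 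Alternatively, and perhaps more cleanly, I can cite \cref{lem:jm}: taking $w=(I-Q_M)v$ there (legitimate since $(I-Q_M)v\in H^{1+\gamma_0}\subset H^{\gamma_0}$) is circular, so the genuine route is the duality computation, but \cref{lem:jm}'s proof is essentially the same duality and I would cross-reference it to avoid repetition. Finally, \cref{eq:I-Qinf} follows by interpolation: the embedding $H^{1/2+\epsilon}(0,T)\hookrightarrow C[0,T]$ and the interpolation inequality $\nm{w}_{H^{1/2}}\lesssim\nm{w}_{L^2}^{1/2}\nm{w}_{H^1}^{1/2}\lesssim \nm{w}_{L^2}^{1/2}\nm{w}_{H^{1+\gamma_0}}^{1/2}$ applied to $w=(I-Q_M)v$, together with \cref{eq:I-Qfrac} and \cref{eq:I-Q0}, give exponent $\tfrac12(-2-j)+\tfrac12(\gamma_0-1-j)=-\tfrac32-j+\tfrac{\gamma_0}{2}$; since $\gamma_0<1/2$ one has $-\tfrac32-j+\tfrac{\gamma_0}{2}<-\tfrac54-j$, but to land exactly on $M^{-1.5-j}$ I would instead interpolate between $L^2$ and $H^{1+\gamma_0}$ at the exponent $\theta=\tfrac{1/2}{1+\gamma_0}$, giving $\tfrac12$-regularity with rate $M^{(1-\theta)(-2-j)+\theta(\gamma_0-1-j)}$, and a short arithmetic check shows the resulting exponent is $\le -1.5-j$ when the sharper $-2-j-2\gamma_0$ from the duality step is used in place of $-2-j$.

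The main obstacle I anticipate is the duality step for \cref{eq:I-Q0}: I must set up the adjoint problem so that its solution operator is exactly the $L^2$-adjoint of the bilinear form $(D_{0+}^{\gamma_0}(\cdot)',D_{T-}^{\gamma_0}(\cdot)')$ appearing in the definition of $Q_M$, identify the right boundary condition at $t=T$, and — crucially — prove the regularity pickup $\phi\in H^{1+2\gamma_0+\epsilon}$ (or enough of it) so that \cref{lem:spectral_err} delivers the extra $M^{-2\gamma_0}$ factor. This regularity is where the sharpness of the estimates lives; the rest is bookkeeping with the norm equivalences from \cref{lem:core} and standard interpolation. If the adjoint regularity turns out delicate near the endpoints, I would instead prove \cref{eq:I-Q0} with the weaker but sufficient exponent $-2-j$ by a direct argument: write $(I-Q_M)v=(I-Q_M)(v-\pi)$ for the best $H^{1+\gamma_0}$-approximation $\pi$, use stability of $Q_M$ in $H^{1+\gamma_0}$ (from the first part) plus the embedding $H^{1+\gamma_0}\hookrightarrow C[0,T]\hookrightarrow L^2$ to bound $\nm{(I-Q_M)v}_{L^2}\lesssim\nm{v-\pi}_{H^{1+\gamma_0}}$, and then sharpen by one power of $M$ via the orthogonality-plus-Poincaré observation that $(I-Q_M)v$ integrates to a controlled quantity — but the duality route is cleaner and I would pursue it first.
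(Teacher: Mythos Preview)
Your plan for \cref{eq:I-Qfrac} is exactly the paper's argument: quasi-optimality of $Q_M$ in the $H^{1+\gamma_0}$-seminorm via \cref{lem:core} and the Galerkin orthogonality, followed by \cref{lem:spectral_err}. For \cref{eq:I-Q0} your duality is also what the paper does, though more directly: it simply reruns the computation in the proof of \cref{lem:jm} with $w=(I-Q_M)v$, obtaining
\[
  \nm{(I-Q_M)v}_{L^2(0,T)}^2 \lesssim
  \nm{(I-Q_M)v}_{H^{1+\gamma_0}(0,T)}
  \inf_{q\in P_{M-1}[0,T]}\nm{I_{T-}^{1+2\gamma_0}(I-Q_M)v-q}_{H^{\gamma_0}(0,T)}.
\]
Since $(I-Q_M)v$ is only placed in $L^2$ on the right, $I_{T-}^{1+2\gamma_0}$ maps it into $H^{1+2\gamma_0}$ (not $H^{1+3\gamma_0}$), so the infimum is $\lesssim CM^{-1-\gamma_0}\nm{(I-Q_M)v}_{L^2}$, one power of $M^{-\gamma_0}$ less than you claimed; combined with \cref{eq:I-Qfrac} this gives exactly $M^{-2-j}$, not better. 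There is nothing circular here --- \cref{lem:jm} is already established --- and no separate adjoint PDE needs to be set up.

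The genuine gap is in \cref{eq:I-Qinf}. Both of your interpolation routes try to land at $H^{1/2}(0,T)$ and then pass to $C[0,T]$, but $H^{1/2}(0,T)\not\hookrightarrow C[0,T]$; you acknowledged needing $H^{1/2+\epsilon}$, yet your rate computation is done at the endpoint $\theta=\tfrac{1/2}{1+\gamma_0}$, and any $\epsilon>0$ spoils the exponent (the constant would blow up as $\epsilon\to0$). Your first route, bounding $\nm{w}_{H^1}\leqslant\nm{w}_{H^{1+\gamma_0}}$ inside Gagliardo--Nirenberg, wastes $\gamma_0/2$ and gives only $M^{-1.5-j+\gamma_0/2}$, as you noticed. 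The paper's fix is a small but essential extra step: first interpolate \cref{eq:I-Q0} and \cref{eq:I-Qfrac} between $L^2$ and $H^{1+\gamma_0}$ at $\theta=\tfrac{1}{1+\gamma_0}$ to get the clean $H^1$-estimate
\[
  \nm{(I-Q_M)v}_{H^1(0,T)}\lesssim CM^{-1-j}\nm{v''}_{B^j(0,T)},
\]
and only then apply the sharp Gagliardo--Nirenberg inequality $\nm{w}_{C[0,T]}\lesssim\nm{w}_{L^2}^{1/2}\nm{w}_{H^1}^{1/2}$, which yields $\tfrac12(-2-j)+\tfrac12(-1-j)=-1.5-j$ with no endpoint loss.
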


Observing that if $ v \in H^2(0,T) $ then a direct calculation yields
\[
  D_{0+}^\gamma (v-v(0) - tv'(0)) = D_{0+}^{\gamma-1} (v'-v'(0)),
\]
we easily see that \cref{lem:base} is a direct consequence of
\cite[Lemma~2.6]{Li2009}. It remains, therefore, to prove \cref{lem:jm,lem:I-Q}.
To this purpose, let us first prove the following lemma.
\begin{lem}
  \label{lem:regu}
  If $ v \in L^2(0,T) $, then
  \begin{equation}
    \label{eq:regu}
    \nm{I_{T-}^{2\gamma_0} v}_{H^{2\gamma_0}(0,T)} \lesssim \nm{v}_{L^2(0,T)}.
  \end{equation}
\end{lem}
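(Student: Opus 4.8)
The plan is to establish the mapping property $I_{T-}^{2\gamma_0}: L^2(0,T) \to H^{2\gamma_0}(0,T)$ by combining the known $L^2$-boundedness of fractional integrals (third bullet of \cref{lem:basic-frac}, which gives $\nm{I_{T-}^{2\gamma_0}v}_{L^2(0,T)} \lesssim \nm{v}_{L^2(0,T)}$, noting $2\gamma_0 = \gamma - 1 \in (0,1)$) with a control of a fractional seminorm of order $2\gamma_0$. Since $0 < 2\gamma_0 < 1$, the target space $H^{2\gamma_0}(0,T)$ is a genuinely fractional Sobolev space, so I expect to work through the Gagliardo seminorm or, more conveniently, through an interpolation-space characterization: $H^{2\gamma_0}(0,T) = [L^2(0,T), H^1(0,T)]_{2\gamma_0}$.

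First I would try the interpolation route. If $2\gamma_0 \neq 1/2$ one has $H^{2\gamma_0}(0,T)=[L^2(0,T),H^1(0,T)]_{2\gamma_0}$ with equivalent norms, so it suffices to bound $I_{T-}^{2\gamma_0}$ on the two endpoints and invoke the interpolation inequality. At the $L^2$ endpoint this is exactly the second bullet of \cref{lem:basic-frac}. At the $H^1$ endpoint the natural statement is $\nm{I_{T-}^{2\gamma_0}v}_{H^1(0,T)} \lesssim \nm{v}_{H^{1-2\gamma_0}(0,T)}$ or a similar shift; more precisely, differentiating $I_{T-}^{2\gamma_0}v$ produces (up to sign) $D_{T-}^{1-2\gamma_0}v = D_{T-}^{\gamma_0}$-type objects, so one would want a bound like $\nm{(I_{T-}^{2\gamma_0}v)'}_{L^2(0,T)}\lesssim \nm{v}_{H^{2\gamma_0}(0,T)}$ — but that has the wrong hypothesis ($v\in L^2$ only). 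The cleaner fix is to write $I_{T-}^{2\gamma_0} = I_{T-}^{\gamma_0} I_{T-}^{\gamma_0}$ (semigroup property, first bullet of \cref{lem:basic-frac}) and recognize, via \cref{lem:core} applied on the right-end, that $I_{T-}^{\gamma_0}$ maps $L^2(0,T)$ into a space norm-equivalent to $H^{\gamma_0}(0,T)$ — indeed $D_{T-}^{\gamma_0}I_{T-}^{\gamma_0}v = I_{T-}^{0}v = v$ formally, so $\nm{I_{T-}^{\gamma_0}v}_{H^{\gamma_0}(0,T)}\sim \nm{D_{T-}^{\gamma_0}I_{T-}^{\gamma_0}v}_{L^2(0,T)} = \nm{v}_{L^2(0,T)}$. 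Applying this twice would give $I_{T-}^{2\gamma_0}: L^2 \to H^{\gamma_0} \to H^{2\gamma_0}$ provided $I_{T-}^{\gamma_0}$ also maps $H^{\gamma_0}(0,T)$ boundedly into $H^{2\gamma_0}(0,T)$.

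So the heart of the argument reduces to the lifting statement: $I_{T-}^{\gamma_0}: H^{\gamma_0}(0,T) \to H^{2\gamma_0}(0,T)$ boundedly. I would prove this again by \cref{lem:core}: for $w \in H^{\gamma_0}(0,T)$ one has $D_{T-}^{\gamma_0}(I_{T-}^{\gamma_0}w) = w$ and $D_{0+}^{\gamma_0}$ applied to $I_{T-}^{\gamma_0}w$ can be analyzed via the integration-by-parts/adjointness relations of \cref{lem:basic-frac} together with the commutation of $I_{0+}^{\gamma_0}$ and $I_{T-}^{\gamma_0}$; one then checks that $D_{0+}^{\gamma_0}(I_{T-}^{\gamma_0}w) = I_{0+}^{2\gamma_0}D_{0+}^{\gamma_0}w$ modulo boundary terms that vanish under the smoothness $w\in H^{\gamma_0}$, whence $\nm{I_{T-}^{\gamma_0}w}_{H^{2\gamma_0}(0,T)}\sim\nm{D_{0+}^{2\gamma_0}(I_{T-}^{\gamma_0}w)}_{L^2(0,T)}\lesssim\nm{D_{0+}^{\gamma_0}w}_{L^2(0,T)}\sim\nm{w}_{H^{\gamma_0}(0,T)}$, using \cref{lem:core} for the equivalence with fractional-derivative norms (now of order $2\gamma_0$, which again requires $2\gamma_0 \neq 1/2$, i.e. $\gamma \neq 3/2$, or a separate interpolation argument in that borderline case).

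The main obstacle I anticipate is the bookkeeping of endpoint/boundary terms when switching between $I_{0+}$- and $I_{T-}$-based fractional derivatives of non-integer order and matching them to the Hilbert–Sobolev scale: the equivalences in \cref{lem:core} are stated only at order $\gamma_0$, so extending them to order $2\gamma_0$ (equivalently, justifying $\nm{I_{0+}^{2\gamma_0}g}_{H^{2\gamma_0}}\lesssim\nm{g}_{L^2}$) either needs the cited results of Ervin and Roop to be invoked at the new exponent or needs a self-contained interpolation between orders $0$ and $1$ handled carefully at the value $2\gamma_0 = 1/2$. I would organize the write-up so that the one genuinely needed ingredient — boundedness $I_{0+}^{2\gamma_0},I_{T-}^{2\gamma_0}: L^2(0,T)\to H^{2\gamma_0}(0,T)$ — is isolated, proved once via interpolation against the trivial $L^2\to L^2$ and the classical $L^1\to H^1$-type bounds, and then everything else follows from \cref{lem:basic-frac} and \cref{lem:core}.
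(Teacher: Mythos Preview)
Your high-level factorisation $I_{T-}^{2\gamma_0}=I_{T-}^{\gamma_0}I_{T-}^{\gamma_0}$ and the two-step plan ``first $L^2\to H^{\gamma_0}$, then $H^{\gamma_0}\to H^{2\gamma_0}$'' are exactly what the paper does. The difficulties are in the two technical steps you propose.

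For the first step you invoke \cref{lem:core} to write $\nm{I_{T-}^{\gamma_0}v}_{H^{\gamma_0}}\sim\nm{D_{T-}^{\gamma_0}I_{T-}^{\gamma_0}v}_{L^2}=\nm{v}_{L^2}$. But \cref{lem:core} has the hypothesis that the function already lies in $H^{\gamma_0}(0,T)$; you are using the equivalence to \emph{prove} membership, which is circular. (A density argument could rescue this, but you do not supply one.) The paper instead extends $v$ by zero to $\mathbb R$, computes the Fourier transform of the right-sided fractional integral explicitly, and reads off $\nm{I_{T-}^{\gamma_0}v}_{H^{\gamma_0}(0,T)}\lesssim\nm{v}_{L^2(0,T)}$ from Plancherel --- a one-line argument that avoids any circularity.

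For the second step you claim a commutation identity $D_{0+}^{\gamma_0}(I_{T-}^{\gamma_0}w)=I_{0+}^{2\gamma_0}D_{0+}^{\gamma_0}w$, relying on ``the commutation of $I_{0+}^{\gamma_0}$ and $I_{T-}^{\gamma_0}$''. These operators do \emph{not} commute: take $w\equiv 1$ and evaluate $I_{0+}^{\gamma_0}I_{T-}^{\gamma_0}1$ and $I_{T-}^{\gamma_0}I_{0+}^{\gamma_0}1$ at $t=0$; the first vanishes while the second is positive. So that identity fails, and with it your direct route to $H^{\gamma_0}\to H^{2\gamma_0}$. Your fallback --- invoking \cref{lem:core} at order $2\gamma_0$ --- is essentially assuming the mapping property you want, and the final ``interpolate $L^2\to L^2$ against $L^1\to H^1$'' does not produce the right target scale. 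The paper's argument for this step is: from the Fourier step one also gets $I_{T-}^{\gamma_0}:H_0^1(0,T)\to H^{1+\gamma_0}(0,T)$ (since $(I_{T-}^{\gamma_0}v)'=I_{T-}^{\gamma_0}v'$ for $v\in H_0^1$), and then real interpolation between the source spaces $L^2$ and $H_0^1$ (using $H_0^{\gamma_0}=H^{\gamma_0}$ for $\gamma_0<1/2$) yields $I_{T-}^{\gamma_0}:H^{\gamma_0}\to H^{2\gamma_0}$.
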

\begin{proof}
  Define
  \[
    w(t) := \frac1{\Gamma(\gamma_0)} \int_t^\infty
    (s-t)^{\gamma_0-1} v(s) \, \mathrm{d}s,
    \quad -\infty < t < \infty,
  \]
  where $ v $ is extended to $ \mathbb R \setminus (0,T) $ by zero. Since $ 0 <
  \gamma_0 < 0.5 $, a routine calculation yields $ w \in L^2(\mathbb R) $, and
  then \cite[Theorem~7.1]{Samko1993} implies
  \[
    \mathcal Fw(\xi) = (-\mathrm{i}\xi)^{-\gamma_0} \mathcal Fv(\xi),
    \quad -\infty < \xi < \infty,
  \]
  where $ \mathcal F: L^2(\mathbb R) \to L^2(\mathbb R) $ is the Fourier
  transform operator, and $ \mathrm{i} $ is the imaginary unit. Therefore, the
  well-known Plancherel Theorem yields
  \[
    \nm{w}_{H^{\gamma_0}(\mathbb R)} \lesssim \nm{v}_{L^2(0,T)}
  \]
  and hence
  \[
    \nm{I_{T-}^{\gamma_0} v}_{H^{\gamma_0}(0,T)} \lesssim
    \nm{v}_{L^2(0,T)}.
  \]
  Furthermore, if $ v \in H_0^1(0,T) $ then
  \[
    \nm{ I_{T-}^{\gamma_0}v }_{ H^{1+\gamma_0}(0, T) }
    \lesssim \nm{v}_{ H_0^1(0,T)},
  \]
  by the evident equality $ ( I_{T-}^{\gamma_0} v )' = I_{T-}^{\gamma_0} v' $.
  Consequently, since $ H_0^{\gamma_0}(0,T) $ coincides with $ H^{\gamma_0}(0,T)
  $ with equivalent norms, applying \cite[Lemma~22.3]{Tartar2007} gives
  \[
    \nm{ I_{T-}^{2\gamma_0} v }_{ H^{2\gamma_0}(0,T) } =
    \nm{ I_{T-}^{\gamma_0} I_{T-}^{\gamma_0} v }_{ H^{2\gamma_0}(0,T) }
    \lesssim \nm{ I_{T-}^{\gamma_0} v }_{ H_0^{\gamma_0}(0,T) }
    \lesssim \nm{v}_{L^2(0,T)}.
  \]
  This concludes the proof of the lemma.
\end{proof}

\medskip
\noindent
{\bf Proof of \cref{lem:jm}.}
  Let $ g:= (I-Q_M)v $. Since a straightforward calculation yields
  \[
    \left( I_{0+}^{1-\gamma_0}g' \right)(t) =
    \frac{g'(0)}{ \Gamma(2-\gamma_0) } t^{1-\gamma_0} +
    \left( I_{0+}^{2-\gamma_0}g'' \right)(t),
    \quad 0 < t < T,
  \]
  the fact $ \gamma_0 < 0.5 $ indicates that $ I_{0+}^{1-\gamma_0}g' \in
  H^1(0,T) $ with $ (I_{0+}^{1-\gamma_0}g')(0) = 0 $, and then using integration
  by parts gives
  \begin{align*}
    {} &
    \left(
      D_{0+}^{\gamma_0} g', I_{T-}^{1+\gamma_0} w
    \right)_{L^2(0,T)} =
    \left(
      \left( I_{0+}^{1-\gamma_0} g' \right)',
      I_{T-}^{1+\gamma_0} w
    \right)_{L^2(0,T)} \\
    ={} &
    -\left(
      I_{0+}^{1-\gamma_0} g',
      \left( I_{T-}^{1+\gamma_0} w \right)'
    \right)_{L^2(0,T)} =
    \left(
      I_{0+}^{1-\gamma_0} g', I_{T-}^{\gamma_0} w
    \right)_{L^2(0,T)} \\
    ={} &
    \left( g', I_{T-} w \right)_{L^2(0,T)}.
  \end{align*}
  Hence, as the definition of $ Q_M $ implies $ g(0) = 0 $, we obtain
  \[
    \left(
      D_{0+}^{\gamma_0} g', I_{T-}^{1+\gamma_0} w
    \right)_{L^2(0,T)} =
    \left( g', I_{T-}w \right)_{L^2(0,T)} = (g, w)_{L^2(0,T)},
  \]
  which, combined with the evident equality
  \[
    I_{T-}^{1+\gamma_0} w = D_{T-}^{\gamma_0} I_{T-}^{1+2\gamma_0}w,
  \]
  gives
  \[
    \big( g,w \big)_{L^2(0,T)} =
    \left(
      D_{0+}^{\gamma_0} g',
      D_{T-}^{\gamma_0} I_{T-}^{1+2\gamma_0} w
    \right)_{L^2(0,T)}.
  \]
  Therefore, \cref{lem:core}, the definition of $ Q_M $ and the Cauchy-Schwarz
  inequality indicate
  \[
    \big( g, w \big)_{L^2(0,T)} \lesssim
    \nm{g}_{H^{1+\gamma_0}(0,T)}
    \inf_{q \in P_{M-1}[0,T]}
    \nm{I_{T-}^{1+2\gamma_0}w - q}_{H^{\gamma_0}(0,T)}.
  \]

  Clearly, to prove \cref{eq:jm}, by \cref{lem:spectral_err} it suffices to prove
  \[
    \nm{ I_{T-}^{1+2\gamma_0}w }_{ H^{1+3\gamma_0}(0,T) }
    \lesssim \nm{w}_{ H^{\gamma_0}(0,T) },
  \]
  but since
  \[
    \nm{I_{T-}^{1+2\gamma_0} w}_{H^{1+3\gamma_0}(0,T)} \lesssim
    \nm{I_{T-}^{2\gamma_0} w}_{H^{3\gamma_0}(0,T)},
  \]
  we only need to show
  \begin{equation}
    \label{eq:93}
    \nm{I_{T-}^{2\gamma_0} w}_{H^{3\gamma_0}(0,T)} \lesssim
    \nm{w}_{H^{\gamma_0}(0,T)}.
  \end{equation}
  To this end, observe that \cref{lem:regu} gives
  \[
    \nm{I_{T-}^{2\gamma_0} w}_{H^{2\gamma_0}(0,T)} \lesssim
    \nm{w}_{L^2(0,T)}
  \]
  and that if $ w \in H_0^1(0,T) $ then, due to
  \[
    \left( I_{T-}^{2\gamma_0} w \right)' =
    \left( -I_{T-}^{1+2\gamma_0} w' \right)' =
    I_{T-}^{2\gamma_0} w',
  \]
  again \cref{lem:regu} gives
  \[
    \nm{ I_{T-}^{2\gamma_0}w }_{ H^{1+2\gamma_0}(0,T) }
    \lesssim \nm{w}_{H_0^1(0,T)}.
  \]
  Consequently, using \cite[Lemma~22.3]{Tartar2007} yields \cref{eq:93} and thus
  proves \cref{lem:jm}.
\hfill\ensuremath{\blacksquare}

\medskip\noindent
{\bf Proof of \cref{lem:I-Q}.}
  Let us first consider \cref{eq:I-Qfrac}. For each $ p \in P_{M-1}[0,T] $,
  by \cref{lem:core}, the definition of $ Q_M $ and the Cauchy-Schwarz
  inequality, we obtain
  \begin{align*}
    {} &
    \nm{(Q_Mv)'-p}_{ H^{\gamma_0}(0,T) }^2 \\
    \sim{} &
    \Big(
      D_{0+}^{\gamma_0} \big( (Q_Mv)'-p \big),
      D_{T-}^{\gamma_0} \big( (Q_Mv)'-p \big)
    \Big)_{L^2(0,T)} \\
    ={} &
    \Big(
      D_{0+}^{\gamma_0} (v'-p),
      D_{T-}^{\gamma_0} \big( (Q_Mv)'-p \big)
    \Big)_{L^2(0,T)} \\
    \lesssim{} &
    \nm{v'-p}_{H^{\gamma_0}(0,T)}
    \nm{(Q_Mv)'-p}_{H^{\gamma_0}(0,T)},
  \end{align*}
  which indicates
  \[
    \nm{(Q_Mv)'-p}_{ H^{\gamma_0}(0,T) } \lesssim
    \nm{v'-p}_{ H^{\gamma_0}(0,T) }
  \]
  and hence
  \[
    \nm{(v-Q_Mv)'}_{ H^{\gamma_0}(0,T) } \lesssim
    \nm{v'-p}_{ H^{\gamma_0}(0,T) }.
  \]
  Therefore, since the fact $ (v-Q_Mv)(0) = 0 $ implies
  \[
    \nm{(I-Q_M)v}_{H^{1+\gamma_0}(0,T)} \sim
    \nm{(v-Q_Mv)'}_{H^{\gamma_0}(0,T)},
  \]
  using \cref{lem:spectral_err} proves \cref{eq:I-Qfrac}.

  Next let us consider \cref{eq:I-Q0,eq:I-Qinf}. Proceeding as in the proof of
  \cref{lem:jm} gives
  \begin{align*}
    {} &
    \nm{(I-Q_M)v}_{L^2(0,T)}^2 \\
    \lesssim{} &
    \nm{(I-Q_M)v}_{H^{1+\gamma_0}(0,T)}
    \inf_{ q \in P_{M-1}[0,T] }
    \nm{ I_{T-}^{1+2\gamma_0} (I-Q_M)v-q }_{ H^{\gamma_0}(0,T) } \\
    \lesssim{} &
    C M^{-1-\gamma_0} \nm{  (I-Q_M)v }_{H^{1+\gamma_0}(0,T)}
    \nm{(I-Q_M)v}_{L^2(0,T)},
  \end{align*}
  which proves \cref{eq:I-Q0} by \cref{eq:I-Qfrac}. Then, combining
  \cref{eq:I-Qfrac,eq:I-Q0} and applying \cite[Lemma~22.3]{Tartar2007} yield
  \[
    \nm{(I-Q_M)v}_{H^1(0,T)} \lesssim C M^{-1-j} \nm{v''}_{B^j(0,T)},
  \]
  so that, by \cref{eq:I-Q0}, the estimate \cref{eq:I-Qinf} follows from the
  Gagliardo-Nirenberg interpolation inequality, namely,
  \[
    \nm{w}_{C[0,T]} \lesssim
    \nm{w}_{L^2(0,T)}^\frac12 \nm{w}_{H^1(0,T)}^\frac12,
    \quad \forall w \in H^1(0,T).
  \]
  This concludes the proof of \cref{lem:I-Q}.



\hfill\ensuremath{\blacksquare}


\begin{rem}
  Assume that $ P_M[0,T] $ and $ P_{M-1}[0,T] $ are respectively replaced by
  \[
    P_M[0,T] + \left\{ cw^{1+2\gamma_0}:\ c \in \mathbb R \right\}
    \quad\text{and}\quad
    P_{M-1}[0,T] + \left\{ c w^{2\gamma_0}:\ c \in \mathbb R \right\},
  \]
  where $ w(t) := T-t,\ 0 < t < T $. For each $ v \in H^{1+\gamma_0}(0,T) $,
  the definition of $ Q_M $ implies
  \[
    \int_0^T
    D_{0+}^{\gamma_0}(v-Q_Mv)'
    D_{T-}^{\gamma_0} w^{2\gamma_0}
    \, \mathrm{d}t = 0,
  \]
  and then, as in the previous remark, a straightforward computing yields
  \[
    (v-Q_Mv)(T) = 0.
  \]
  Correspondingly, we can improve \cref{coro:conv} by
  \[
    \xi_5 := h^m \nm{u(T)}_{H^{m+1}(\Omega)}.
  \]
\end{rem}

\subsection{Proofs of Theorems 3.1 and 3.2 and Corollary 3.1}
\noindent
{\bf Proof of \cref{thm:stability}.}
  Since \cref{eq:stability} contains the unique existence of $ U $, it suffices
  to prove the former. Observe first that integration by parts yields
  \[
    2 ( \nabla U, \nabla U' )_{L^2(\Omega_T)} =
    \nm{U(T)}_{H_0^1(\Omega)}^2 -
    \nm{U(0)}_{H_0^1(\Omega)}^2
  \]
  and that \cref{lem:core} implies
  \begin{align*}
    \nm{ D_{0+}^{\gamma_0} u_{h,1} }_{ L^2(\Omega_T) } \sim
    \nm{u_{h,1}}_{ H^{\gamma_0}(0,T;L^2(\Omega) }
    \sim \nm{u_{h,1}}_{L^2(\Omega)}, \\
    \left( D_{0+}^{\gamma_0} U', D_{T-}^{\gamma_0} U' \right)_{ L^2(\Omega_T) }
    \sim \nm{U'}_{ H^{\gamma_0}( 0,T; L^2(\Omega) ) }^2
    \sim \nm{D_{T-}^{\gamma_0} U' }_{ L^2(\Omega_T) }^2.
  \end{align*}
  Moreover, the fact that $ u_{h,1} $ is the $ L^2(\Omega)$-projection of $ u_1
  $ onto $ V_h $ gives
  \[
    \nm{u_{h,1}}_{L^2(\Omega)} \leqslant \nm{u_1}_{L^2(\Omega)}.
  \]
  Consequently, by the Cauchy-Schwarz inequality and the Young's inequality with
  $ \epsilon $, inserting $ V := U' $ into \cref{eq:U} yields
  \begin{align*}
    {} &
    \nm{U'}_{ H^{\gamma_0}( 0,T; L^2(\Omega) ) } +
    \nm{U(T)}_{ H_0^1(\Omega) } \\
    \lesssim{} &
    \nm{U(0)}_{ H_0^1(\Omega) } +
    \nm{u_1}_{L^2(\Omega)} +
    \nm{f}_{ L^2(\Omega_T) },
  \end{align*}
  which, combined with the estimate
  \[
    \nm{U}_{ H^{1+\gamma_0}( 0,T; L^2(\Omega) ) } \sim
    \nm{U(0)}_{L^2(\Omega)} +
    \nm{U'}_{ H^{\gamma_0}( 0,T; L^2(\Omega) ) },
  \]
  indicates
  \begin{align*}
    {} &
    \nm{U}_{ H^{1+\gamma_0}( 0,T; L^2(\Omega) ) } +
    \nm{U(T)}_{ H_0^1(\Omega) } \\
    \lesssim{} &
    \nm{U(0)}_{ H_0^1(\Omega) } +
    \nm{u_1}_{L^2(\Omega)} +
    \nm{f}_{ L^2(\Omega_T) }.
  \end{align*}
  As the definition of $ R_h $ and the fact $ U(0) = R_hu_0 $ imply
  \[
    \nm{U(0)}_{H_0^1(\Omega)} \leqslant
    \nm{u_0}_{H_0^1(\Omega)},
  \]
  this proves \cref{eq:stability} and thus concludes the proof of
  \cref{thm:stability}.
\hfill\ensuremath{\blacksquare}

\medskip\noindent
{\bf Proof of \cref{thm:conv}.}
  Set $ \rho := (I-Q_MR_h)u $ and $ \theta := U - Q_MR_hu $. By \cref{lem:base}
  and integration by parts, using \cref{eq:model} gives
  \[
    \left(
      D_{0+}^{\gamma_0} (u' - u_1),
      D_{T-}^{\gamma_0} \theta'
    \right)_{L^2(\Omega_T)} +
    (\nabla u, \theta')_{L^2(\Omega_T)} =
    (f, \theta')_{L^2(\Omega_T)},
  \]
  which, together with \cref{eq:U}, yields
  \[
    \left(
      D_{0+}^{\gamma_0} \theta',
      D_{T-}^{\gamma_0} \theta'
    \right)_{L^2(\Omega_T)} +
    ( \nabla \theta,\nabla \theta' )_{L^2(\Omega_T)} =
    \mathbb I_1 + \mathbb I_2 + \mathbb I_3,
  \]
  where
  \begin{align*}
    \mathbb I_1 &:=
    ( \nabla \rho,\nabla \theta' )_{L^2(\Omega_T)}, \\
    \mathbb I_2 &:=
    \left(
      D_{0+}^{\gamma_0} \rho', D_{T-}^{\gamma_0} \theta'
    \right)_{ L^2(\Omega_T) }, \\
    \mathbb I_3 &:=
    -\left(
      D_{0+}^{\gamma_0} ( u_1 - u_{h,1} ),
      D_{T-}^{\gamma_0} \theta'
    \right)_{ L^2(\Omega_T) }.
  \end{align*}
  Moreover, the fact $ \theta(0) = 0 $ gives
  \[
    ( \nabla\theta, \nabla\theta' )_{ L^2(\Omega_T) } =
    \frac12 \nm{\theta(T)}_{ H_0^1(\Omega) }^2
  \]
  by integration by parts, and \cref{lem:core} implies
  \[
    \left(
      D_{0+}^{\gamma_0} \theta', D_{T-}^{\gamma_0} \theta'
    \right)_{L^2(\Omega_T)} \sim
    \nm{\theta'}_{ H^{\gamma_0}( 0,T; L^2(\Omega) ) }^2.
  \]
  Therefore, it follows
  \begin{equation}
    \label{eq:err-esti}
    \nm{\theta'}_{ H^{\gamma_0}( 0,T; L^2(\Omega) ) }^2 +
    \nm{\theta(T)}_{H_0^1(\Omega)}^2 \lesssim
    \mathbb I_1 + \mathbb I_2 + \mathbb I_3.
  \end{equation}

  Let us first estimate $ \mathbb I_1 $. Since $ R_h: H_0^1(\Omega) \to
  \mathring V_h $ and $ -\Delta: H^2(\Omega) \to L^2(\Omega) $ are two bounded
  linear operators, \cref{lem:AQ} implies
  \[
    Q_M R_h u = R_h Q_M u \quad \text{ and } \quad
    Q_M(-\Delta u) = -\Delta Q_M u,
  \]
  so that, by integration by parts and the definition of $ R_h $, a
  straightforward calculation gives
  \begin{align*}
    \mathbb I_1
    &= \int_0^T
    \big( \nabla(I-R_hQ_M) u , \nabla \theta' \big)_{L^2(\Omega)}
    \, \mathrm{d}t \\
    &=
    \int_0^T
    \big( \nabla (I-Q_M) u , \nabla \theta' \big)_{L^2(\Omega)}
    \, \mathrm{d}t \\
    &=
    \int_0^T
    \big( -\Delta (I-Q_M) u, \theta' \big)_{L^2(\Omega)} \\
    &=
    \int_0^T \big( (I-Q_M)(-\Delta u) , \theta' \big)_{L^2(\Omega)}
    \, \mathrm{d}t,
  \end{align*}
  Therefore, \cref{lem:jm} leads to
  \begin{equation}
    \label{eq:I_1}
    \mathbb I_1 \lesssim
    C M^{-1-2\gamma_0} \nm{ (I-Q_M)\Delta u }_{ H^{1+\gamma_0}(0,T;L^2(\Omega)) }
    \nm{\theta'}_{ H^{\gamma_0}( 0,T;L^2(\Omega) ) }.
  \end{equation}

  Next let us estimate $ \mathbb I_2 $ and $ \mathbb I_3 $. The definition of $
  Q_M $ gives
  \[
    \mathbb I_2 =
    \left(
      D_{0+}^{\gamma_0} (u - Q_MR_hu)', D_{T-}^{\gamma_0} \theta'
    \right)_{L^2(\Omega_T)} =
    \left(
      D_{0+}^{\gamma_0} ( u-R_hu )', D_{T-}^{\gamma_0} \theta'
    \right)_{L^2(\Omega_T)},
  \]
  so that the Cauchy-Schwarz inequality and \cref{lem:core} indicate
  \begin{equation}
    \label{eq:I_2}
    \mathbb I_2 \lesssim
    \nm{(I-R_h)u}_{H^{1+\gamma_0}(0,T;L^2(\Omega))}
    \nm{\theta'}_{H^{\gamma_0}(0,T;L^2(\Omega))}.
  \end{equation}
  By the evident estimate
  \[
    \nm{ u_1 - u_{h,1} }_{ H^{\gamma_0}(0,T;\Omega_T) } \sim
    \nm{ u_1 - u_{h,1} }_{L^2(\Omega)},
  \]
  the Cauchy-Schwarz inequality and \cref{lem:core} also yield
  \begin{equation}
    \label{eq:I_3}
    \mathbb I_3  \lesssim
    \nm{ u_1 - u_{h,1} }_{L^2(\Omega)}
    \nm{\theta'}_{ H^{\gamma_0}( 0,T; L^2(\Omega) ) }.
  \end{equation}

  Finally, by the Young's inequality with $ \epsilon $, combining
  \cref{eq:err-esti,eq:I_1,eq:I_2,eq:I_3} gives
  \[
    \nm{\theta'}_{ H^{\gamma_0} ( 0,T; L^2(\Omega) ) } +
    \nm{\theta(T)}_{ H_0^1(\Omega) } \lesssim
    \eta_1 + \eta_2 + \eta_3.
  \]
  Since $ \theta(0) = 0 $ implies
  \[
    \nm{\theta}_{
      H^{1+\gamma_0}( 0,T; L^2(\Omega) )
    } \sim
    \nm{\theta'}_{
      H^{\gamma_0} 0,T; L^2(\Omega) )
    },
  \]
  it follows
  \[
    \nm{\theta}_{ H^{1+\gamma_0} ( 0,T; L^2(\Omega) ) } +
    \nm{\theta(T)}_{ H_0^1(\Omega) } \lesssim
    \eta_1 + \eta_2 + \eta_3.
  \]
  As \cref{eq:main-1,eq:main-2} are evident from the above estimate, this
  concludes the proof of \cref{thm:conv}.
\hfill\ensuremath{\blacksquare}

\medskip\noindent
{\bf Proof of \cref{coro:conv}.}
  It suffices to prove $ \eta_i \lesssim \xi_i $ for all $ 1 \leqslant i
  \leqslant 5 $, where $ \{ \eta_i \}_{i=1}^5 $ are defined in \cref{thm:conv}.
  Observing that $ \eta_1 \lesssim \xi_1 $ is a standard result
  \cite{Ciarlet2002}, that $ \eta_2 \lesssim \xi_2 $ follows from
  \cref{lem:I-Q}, and that $ \eta_3 \lesssim \xi_3 $ follows from
  \cref{lem:R_h}, we only need to prove $ \eta_4 \lesssim \xi_4 $ and $ \eta_5
  \lesssim \xi_5 $.

  Let us first consider $ \eta_4 \lesssim \xi_4 $. By \cref{lem:core}, the
  definition of $ Q_M $ implies
  \[
    \nm{Q_M(I-R_h)u}_{ H^{1+\gamma_0}( 0,T; L^2(\Omega) ) }
    \lesssim \nm{(I-R_h)u}_{ H^{1+\gamma_0}( 0,T; L^2(\Omega) ) },
  \]
  so that \cref{lem:R_h} and \cite[Lemma~22.3]{Tartar2007} yield
  \[
    \nm{Q_M(I-R_h)u}_{ H^{1+\gamma_0}( 0,T; L^2(\Omega) ) }
    \lesssim h^{m+1} \nm{u}_{ H^{1+\gamma_0}( 0,T; H^{m+1}(\Omega) ) }.
  \]
  Moreover, \cref{lem:I-Q} gives
  \[
    \nm{(I - Q_M)u}_{ H^{1+\gamma_0}(0,T; L^2(\Omega)) }
    \lesssim C M^{\gamma_0 -1 - r }
    \nm{u''}_{ B^r(0,T; L^2(\Omega)) }.
  \]
  Consequently, $ \eta_4 \lesssim \xi_4 $ is a direct consequence of the
  inequality
  \begin{align*}
    & \nm{(I-Q_MR_h)u}_{ H^{1+\gamma_0}( 0,T; L^2(\Omega) ) } \\
    \leqslant &
    \nm{(I-Q_M)u}_{ H^{1+\gamma_0}( 0,T; L^2(\Omega) ) } +
    \nm{Q_M(I-R_h)u}_{ H^{1+\gamma_0}( 0,T; L^2(\Omega) ) }.
  \end{align*}

  Then let us consider $ \eta_5 \lesssim \xi_5 $. Since \cref{lem:AQ} gives $
  R_hQ_M u = Q_M R_h u $, the definition of $ R_h $ yields
  \[
    \nm{ (R_hu - Q_MR_hu)(T) }_{ H_0^1(\Omega) } \leqslant
    \nm{(u-Q_Mu)(T)}_{ H_0^1(\Omega) },
  \]
  and hence \cref{lem:I-Q} indicates
  \[
    \nm{(R_hu - Q_MR_hu)(T)}_{H_0^1(\Omega)} \lesssim
    C M^{-1.5-r} \nm{u''}_{B^r(0,T;H_0^1(\Omega))}.
  \]
  Therefore, as \cref{lem:R_h} implies
  \[
    \nm{(I-R_h)u(T)}_{H_0^1(\Omega)} \lesssim
    h^m \nm{u(T)}_{H^{m+1}(\Omega)},
  \]
  the estimate $ \eta_5 \lesssim \xi_5 $ follows from the inequality
  \begin{align*}
    & \nm{(u - Q_MR_hu)(T)}_{H_0^1(\Omega)} \\
    \leqslant &
    \nm{(I - R_h) u(T)}_{H_0^1(\Omega)} +
    \nm{(R_h u - Q_MR_hu)(T)}_{H_0^1(\Omega)}.
  \end{align*}
  This concludes the proof of \cref{coro:conv}.
\hfill\ensuremath{\blacksquare}

\section{Numerical Experiments}
\label{sec:numer}
This section performs some numerical experiments to demonstrate the high order
accuracy of the proposed algorithm in two dimensional case. Throughout this
section we set $\gamma:= 1.5 $, $ T := 1 $ and $ \Omega := (0,1)^2 $.

{\it Example 1.} In this example the solution to problem \cref{eq:model} is
\[
  u(x,t) := t^{20} x_1x_2(1-x_1)(1-x_2),
  \ (x,t) \in \Omega_T,
\]
where $ x = (x_1,x_2) $. Let us first consider the spatial discretization errors
of the proposed algorithm, and, to this end, we set $ M := 20 $ to ensure that
the temporal discretization errors are negligible compared with the former. The
corresponding numerical results, presented in \cref{tab:ex1-x}, illustrate that
the convergence orders of
\[
  \nm{(u-U)(T)}_{H_0^1(\Omega)} \quad \text{ and } \quad
  \nm{u-U}_{H^{1+\gamma_0}(0,T;L^2(\Omega))}
\]
are $ m $ and $ m + 1 $ respectively, which agrees well with \cref{coro:conv}.
Then let us consider the temporal discretization errors and hence set $ m := 4 $
and $ h := 1/32 $ to ensure that the temporal discretization error is dominant.
We present the corresponding numerical results in \cref{tab:ex1-t} and plot the
log-linear relationship between the errors and the polynomial degree $ M $ in
\cref{fig:ex1}. As indicated by \cref{coro:conv}, these numerical results
demonstrate that the errors reduce exponentially as $ M $ increases.

\begin{table}[H]
  \caption{The errors for Example 1 with $ M = 20 $.}
  \label{tab:ex1-x}
  \begin{tabular}{ccccp{0.5cm}cc}
    \hline
    \multirow{2}*{$m$} &
    \multirow{2}*{$1/h$} &
    \multicolumn{2}{c}{$ \nm{u(T)-U(T)}_{H_0^1(\Omega)} $} &&
    \multicolumn{2}{c}{$ \nm{u-U}_{H^{1+\gamma_0}(0,T;L^2(\Omega))} $} \\
    \cline{3-4} \cline{6-7}
    &        &  Error     & Order  &&  Error     & Order   \\
    \hline\multirow{5}*{$1$}
    &  $2$   &  1.19e-01  &  --    &&  8.68e-02  &  --     \\
    &  $4$   &  6.12e-02  &  0.95  &&  1.94e-02  &  2.17   \\
    &  $8$   &  3.06e-02  &  1.01  &&  4.52e-03  &  2.10   \\
    &  $16$  &  1.52e-02  &  1.01  &&  1.10e-03  &  2.03   \\
    &  $32$  &  7.61e-03  &  1.00  &&  2.74e-04  &  2.01   \\
    \hline\multirow {5}*{$2$}
    &  $2$   &  3.12e-02  &  --    &&  1.18e-02  &  --     \\
    &  $4$   &  8.28e-03  &  1.91  &&  1.63e-03  &  2.86   \\
    &  $8$   &  2.11e-03  &  1.97  &&  2.12e-04  &  2.95   \\
    &  $16$  &  5.31e-04  &  1.99  &&  2.67e-05  &  2.98   \\
    &  $32$  &  1.33e-04  &  2.00  &&  3.35e-06  &  3.00   \\
    \hline\multirow {5}*{$3$}
    &  $2$   &  4.92e-03  &  --    &&  1.50e-03  &  --     \\
    &  $4$   &  5.94e-04  &  3.05  &&  9.13e-05  &  4.04   \\
    &  $8$   &  7.28e-05  &  3.03  &&  5.51e-06  &  4.05   \\
    &  $16$  &  9.01e-06  &  3.02  &&  3.36e-07  &  4.04   \\
    &  $32$  &  1.12e-06  &  3.01  &&  2.07e-08  &  4.02   \\
    \hline
  \end{tabular}
\end{table}

\begin{table}[H]
  \renewcommand\arraystretch{1}
  \caption{The errors for Example 1 with $ m=4 $ and $ h=1/32 $.}
  \label{tab:ex1-t}
  \begin{tabular}{cccp{0.5cm}cc}
    \hline
    \multirow{2}*{$ M $} &
    \multicolumn{2}{c}{$ \nm{u(T)-U(T)}_{H_0^1(\Omega)} $} &&
    \multicolumn{2}{c}{$ \nm{u-U}_{H^{1+\gamma_0}(0,T;L^2(\Omega))} $} \\
    \cline{2-3} \cline{5-6}
    & Error      & Order   && Error      & Order  \\
    \hline
    9    &  7.05e-05  &  --     &&  4.13e-03  & --     \\
    11   &  4.48e-06  &  13.74  &&  4.47e-04  & 11.08  \\
    13   &  1.64e-07  &  19.80  &&  2.63e-05  & 16.97  \\
    15   &  3.06e-09  &  27.83  &&  7.28e-07  & 25.06  \\
    17   &  2.10e-11  &  39.80  &&  7.16e-09  & 36.92  \\
    \hline
  \end{tabular}
\end{table}

\begin{figure}[H]
  \centering
  \begin{tikzpicture}
    \begin{axis}[
      small,
      xlabel = polynomial degree $M$,
      ylabel = erros in logscale,
      xtick = {9, 7, 11, 13, 15, 17},
      legend cell align = left,
      legend style = {legend pos = south west, draw=none, font=\tiny}
      ]

      \addplot[smooth,mark=o,color=blue]
      plot coordinates{
        (9, -4.1518)
        (11, -5.3489)
        (13, -6.7848)
        (15, -8.5147)
        (17, -10.6781)
      };
      \addlegendentry{$H_0^1(\Omega)$}

      \addplot[smooth,mark=x,color=red]
      plot coordinates{
        (9, -2.3835)
        (11, -3.3493)
        (13, -4.5805)
        (15, -6.1380)
        (17, -8.1451)
      };
      \addlegendentry{$H^{1+\gamma_0}(0,T;L^2(\Omega))$}
    \end{axis}
  \end{tikzpicture}
  \caption{The log-linear relationship between the errors and the polynomial
    degree $ M $ for Example 1 with $ m=4 $ and $ h=1/32 $. 
  }
  \label{fig:ex1}
\end{figure}
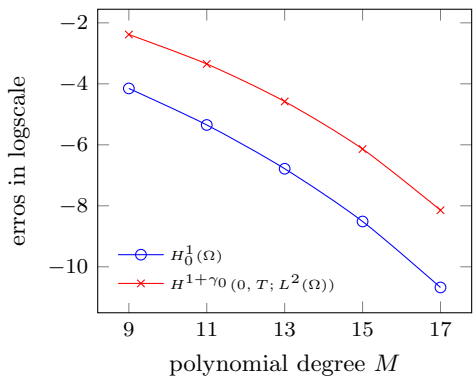

{\it Example 2.} This example adopts
\[
  u(x,t) := t^2 \snm{1-2t}^\beta x_1 (1-x_1) \sin(\pi x_2),
  \quad (x,t) \in \Omega_T
\]
as the solution to problem \cref{eq:model}, where $ \beta $ is a positive
constant. Here we only consider the temporal discretization errors and hence set
$ m := 6 $ and $ h := 2^{-4} $ to ensure that the temporal discretization errors
are dominant. The corresponding numerical results are presented in
\cref{tab:ex2-t1,tab:ex2-t2}. Observing that
\[
  \snm{1-2t}^{\beta} \in H^{\beta+0.5-\epsilon}(0,T)  \quad
  \text{for all $ \epsilon > 0 $ },
\]
by \cref{coro:conv} and \cite[Lemma~22.3]{Tartar2007} we have
\[
  \begin{aligned}
    \nm{u(T)-U(T)}_{ H_0^1(\Omega) } &
    \lesssim C(\epsilon)M^{-\beta+\epsilon} ,\\
    \nm{u-U}_{ H^{1+\gamma_0}( 0,T;L^2(\Omega) ) } &
    \lesssim C(\epsilon)M^{0.75-\beta+\epsilon},
  \end{aligned}
\]
where $ C(\epsilon) $ is a constant that depends on $ \epsilon $. Evidently, for
the convergence order of $ \nm{u-U}_{ H^{1+\gamma_0}( 0,T;L^2(\Omega) ) } $, the
numerical results are in agreement with \cref{coro:conv}. However, in this case,
$ \nm{(u-U)(T)}_{H_0^1(\Omega)} $ reduces significantly faster than that
predicted by \cref{coro:conv}.

\begin{table}[H]
  \renewcommand\arraystretch{1}
  \caption{The errors for Example 2 with $ \beta=2.5 $.}
  \label{tab:ex2-t1}
  \begin{tabular}{cp{0.2cm}ccp{0.2cm}cc}
    \hline
    \multirow{2}*{$M$}  &&
    \multicolumn{2}{c}{$ \nm{u(T)-U(T)}_{H_0^1(\Omega)} $} &&
    \multicolumn{2}{c}{$ \nm{u-U}_{H^{1+\gamma_0}(0,T;L^2(\Omega))} $} \\
    \cline{3-4} \cline{6-7}
    && Error     & Order  && Error      & Order    \\
    \hline
    7    &&  3.80e-5  &  --    &&  3.00e-03  &  --      \\
    9    &&  1.60e-5  &  3.44  &&  1.94e-03  &  1.73    \\
    11   &&  6.32e-6  &  4.63  &&  1.35e-03  &  1.81    \\
    13   &&  2.77e-6  &  4.93  &&  9.94e-04  &  1.84    \\
    15   &&  1.38e-6  &  4.86  &&  7.64e-04  &  1.85    \\
    17   &&  7.40e-7  &  4.99  &&  6.06e-04  &  1.84    \\
    \hline
  \end{tabular}
\end{table}
\begin{table}[H]
  \renewcommand\arraystretch{1}
  \caption{The errors for Example 2 with $ \beta=2.1 $.}
  \label{tab:ex2-t2}
  \begin{tabular}{cp{0.2cm}ccp{0.2cm}cc}
    \hline
    \multirow{2}*{$M$}  &&
    \multicolumn{2}{c}{$ \nm{u(T)-U(T)}_{H_0^1(\Omega)} $} &&
    \multicolumn{2}{c}{$ \nm{u-U}_{H^{1+\gamma_0}(0,T;L^2(\Omega))} $} \\
    \cline{3-4} \cline{6-7}
    &&  Error    &  Order &&  Error     &  Order   \\
    \hline
    7    &&  1.24e-5  &  --    &&  1.05e-03  &  --      \\
    9    &&  5.48e-6  &  3.24  &&  7.49e-03  &  1.36    \\
    11   &&  2.32e-6  &  4.28  &&  5.64e-04  &  1.41    \\
    13   &&  1.08e-6  &  4.56  &&  4.45e-04  &  1.42    \\
    15   &&  5.72e-7  &  4.46  &&  3.63e-04  &  1.43    \\
    17   &&  3.22e-7  &  4.59  &&  3.03e-04  &  1.42    \\
    \hline
  \end{tabular}
\end{table}

\section{Conclusions}
\label{sec:conclu}
In this paper, a high accuracy algorithm for time fractional wave problems is
developed, which adopts a spectral method to approximate the fractional
derivative and uses a finite element method in the spatial discretization.
Stability and a priori error estimates of this algorithm are derived, and
numerical experiments are also performed to verify its high accuracy.

In future work, we shall consider the following issues. Firstly, the optimal
error estimates of $ \nm{(u-U)(T)}_{L^\infty(\Omega)} $ and $
\nm{(u-U)(T)}_{L^2(\Omega)} $ are not established. Secondly, it is worth
applying the idea of approximating fractional differential operators of order $
\gamma $ ($ 1 < \gamma < 2$) by spectral methods to other fractional
differential equations, such as nonlinear fractional ordinary differential
equations and nonlinear time fractional wave equations.


\end{document}